\documentclass[12pt]{amsart}
\usepackage[top=30truemm,bottom=30truemm,left=30truemm,right=30truemm]{geometry}
\UseRawInputEncoding
\usepackage{mathrsfs}
\usepackage{bm}
\usepackage{amsfonts,amssymb}
\usepackage{dsfont}
\usepackage{extarrows}
\usepackage{amsmath}
\usepackage{enumerate}
\usepackage{amscd}
\usepackage[all,2cell]{xy}
\usepackage{hyperref}

\theoremstyle{plain}

\usepackage{tikz}
\usetikzlibrary{arrows.meta}
\theoremstyle{definition}

\newtheorem{thm}{Theorem}[section]

\newtheorem{prop}[thm]{Proposition}

\theoremstyle{definition}
\newtheorem{defn}{Definition}[section]

\theoremstyle{remark}
\newtheorem{rem}{\bf Remark}[section]

\newcommand{\be}{\begin{equation}}
\newcommand{\ee}{\end{equation}}
\newcommand{\bea}{\begin{eqnarray}}
\newcommand{\eea}{\end{eqnarray}}
\newcommand{\ben}{\begin{eqnarray*}}
	\newcommand{\een}{\end{eqnarray*}}
\newcommand{\bt}{\begin{split}}
	\newcommand{\et}{\end{split}}
\newcommand{\bet}{\begin{equation}}
\newcommand{\mc}{\mathbb{C}}
\newcommand{\mr}{\mathbb{R}}
\newcommand{\ra}{\rightarrow}
\newcommand{\beq}{\begin{equation*}}
\newcommand{\eeq}{\end{equation*}}
\newcommand{\bal}{\begin{aligned}}
\newcommand{\eal}{\end{aligned}}

\newcommand{\dbar}{\bar{\partial}}
\newcommand{\pa}{\partial}

\newcommand{\Ker}{{\textup{Ker}}}

\renewcommand{\leq}{\leqslant}%
\renewcommand{\geq}{\geqslant}%
\newcommand{\inner}[1]{\langle#1\rangle}
\newcommand{\iinner}[1]{\langle\langle#1\rangle\rangle}

\makeatletter
\g@addto@macro{\endabstract}{\@setabstract}
\newcommand{\authorfootnotes}{\renewcommand\thefootnote{\@fnsymbol\c@footnote}}%
\makeatother

\newcommand{\C}{\mathbb C}

\newcommand{\dist}{\operatorname{dist}}

\begin{document}
\title[]
{Pseudoconvex Exhaustions of Hartogs Domains Associated with $L^2$-Optimal Pairs I}
\author{Xujun Zhang}
\address{School of Mathematics, Guangxi University, Nanning, Guangxi, 530004, P.R.China}
\address{Guangxi Center for Mathematical Research, Guangxi University, Nanning, Guangxi, 530004, P. R. China}
\email{xujunzhang@amss.ac.cn, xujunzhang@gxu.edu.cn}
\date{}
\subjclass[2020]{32W05, 32U05, 32T99, 32A70}

\maketitle

\begin{abstract}
By constructing a smooth pseudoconvex exhaustion of the associated
Hartogs domain, we give a new proof of the Deng–Ning–Wang's characterization of
plurisubharmonic functions without the smoothness assumption. 
\end{abstract}

\section{Introduction}

A classical result of Grauert relates the negativity of a holomorphic line bundle to the existence of strongly pseudoconvex neighborhoods of its zero section \cite{Grauert1962}. By duality, this also provides a geometric interpretation of curvature positivity in terms of the geometry of neighborhoods of the zero section of the dual bundle.
Locally, the curvature positivity of a Hermitian line bundle is characterized by the plurisubharmonicity of its metric weight.
This naturally leads to studying positivity through pseudoconvex domains in the total space.

In \cite{DNW21,DWZZ,DNWZ22}, 
Deng, Ning, Wang, Zhang, and Zhou developed a converse $L^2$ theory by establishing analytic characterizations of plurisubharmonicity and Nakano positivity in terms of various $L^2$-conditions for the $\bar{\partial}$-operator.
In particular, they proved that a smooth Hermitian metric is Nakano semipositive if and only if it satisfies the ``\textit{optimal \(L^2\)-estimate condition}''. This characterization also leads to a notion of Nakano positivity for singular Hermitian metrics (\cite{In20}), and gives an affirmative answer to a question of Lempert (\cite{LYZ21,Liu-Xiao-Yang-Zhou}).

Let's recall the following definition.

\begin{defn}\label{def:L^2 peroperty}
Let $D$ be a domain in $\mc^n$ and
$\psi$  a real-valued continuous function defined on $ D$.
We say that the pair $(D,\psi)$ is \emph{$L^2$-optimal}
if for any smooth strictly plurisubharmonic function $\phi$ on $\mc^n$,
the equation $\bar\partial u=f$ can be solved on $D$ for any $\dbar$-closed $(0,1)$-form
$f \in L^2_{(0,1)}(D,\phi+\psi)$
with the estimate
$$
\int_D|u|^2 e^{-\phi-\psi} d\lambda
\leq
\int_D \langle B_\phi^{-1} f, f\rangle e^{-\phi-\psi} d\lambda,
$$
where
$B_\phi f = \sum^{n}_{i,j=1}  \frac{\partial^2\phi}{\partial z_i \partial\bar z_ j} f_j d\bar z_i$ and $d\lambda$ is the Lebesgue measure in $\mc^n$.
In particular,
we say that $D$ is \emph{$L^2$-optimal} if  the pair $(D,0)$ is \emph{$L^2$-optimal}.
\end{defn}

For a given pair \((D,\psi)\), we regard \(\psi\) as the weight of a Hermitian metric on the trivial line bundle over \(D\) and consider the associated Hartogs domain
\[
D_\psi
=
\left\{
(z,w)\in D\times\mathbb C:
|w|<e^{-\psi(z)}
\right\}.
\]
Geometrically, \(D_\psi\) is a fiberwise disc neighborhood of the zero section of the trivial line bundle
$D\times\mathbb C\longrightarrow D$
with fiber radius determined by \(\psi\). In this way, the plurisubharmonicity of \(\psi\) is reflected in the pseudoconvexity of the associated Hartogs domain.

This suggests the following geometric approach. Rather than deriving the plurisubharmonicity of \(\psi\) directly from the \(L^2\)-optimality condition, we show that \(L^2\)-optimality implies the pseudoconvexity of the associated Hartogs domain \(D_\psi\). The plurisubharmonicity of \(\psi\), and hence the corresponding curvature semipositivity, then follows from this pseudoconvexity.
The first main result of the present note is the following.

\begin{thm}\label{thm:main-1}
Let $D\subset\mathbb C^n$ be a bounded domain satisfying
$\mathring{\overline D}=D$, and let
$\psi\in C^0(D,\mathbb R)$.
If $(D,\psi)$ is $L^2$-optimal, then the associated Hartogs domain
\[
D_\psi
=
\{(z,w)\in D\times\mathbb C:|w|<e^{-\psi(z)}\}
\]
is pseudoconvex. In particular, $\psi$ is plurisubharmonic.
\end{thm}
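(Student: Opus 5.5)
We first transfer the $L^2$-optimality of $(D,\psi)$ to the Hartogs domain $D_\psi$, and then use the known fact that an $L^2$-optimal domain satisfying $\mathring{\overline\Omega}=\Omega$ is pseudoconvex. The link is the fiberwise Fourier expansion in $w$. Let $\Omega=D_\psi\subset\mathbb C^{n+1}$, take a smooth strictly plurisubharmonic $\Phi$ on $\mathbb C^{n+1}$, and let $F$ be a $\bar\partial$-closed $(0,1)$-form on $\Omega$ in $L^2(\Omega,\Phi)$. We would aim to solve $\bar\partial U=F$ with the optimal estimate $\int_\Omega|U|^2e^{-\Phi}\le\int_\Omega\langle B_\Phi^{-1}F,F\rangle e^{-\Phi}$. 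In general this is out of reach, since $\Phi$ depends on $w$ non-radially. So we do not try to prove full $L^2$-optimality of $\Omega$. Instead we prove the weaker statement actually used in the pseudoconvexity argument: optimal estimates for weights of the special form $\Phi(z,w)=\phi(z)+\epsilon|w|^2$, or more generally weights that are $S^1$-invariant in $w$, for forms with prescribed $w$-character.

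The key steps, in order, are as follows.

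(1) For an $S^1$-invariant weight, decompose functions and forms on $\Omega$ into $w^k$-homogeneous components, $u(z,w)=\sum_k u_k(z)w^k$ for the holomorphic-in-$w$ part. The fiberwise integral of $|w|^{2k}e^{-\epsilon|w|^2}$ over $|w|<e^{-\psi(z)}$ produces a weight $e^{-\psi_k(z)}$ on $D$, where $\psi_k$ is a continuous function of $\psi$. The $L^2$-optimality of $(D,\psi)$ is stated only for the weight $\psi$ itself, so we restrict to the $k=0$ component and test forms supported near a point. With $\epsilon\to0$, the fiber area $\pi e^{-2\psi}$ gives the weight $2\psi$ rather than $\psi$; we handle this by applying the hypothesis to the pair and rescaling, replacing $\psi$ by $\psi/2$ in the definition of $D_\psi$ if necessary. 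We expect to sort out this normalization carefully.

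(2) Rather than this route, a cleaner plan is the standard argument of Deng–Ning–Wang, run directly on $D$ with $\psi$ continuous. Suppose $\psi$ is not plurisubharmonic. Then there are a point $z_0$, a direction $v$, and a small disc on which the sub-mean value inequality fails. Mollifying $\psi$ gives $\psi_\delta$ with $\psi_\delta\to\psi$ locally uniformly and $\partial\bar\partial\psi_\delta(v,v)<-c<0$ on average near $z_0$. Take $f=\bar\partial(\chi\,\langle\cdot,\bar v\rangle)$-type test forms with compact support near $z_0$, choose $\phi$ a quadratic in $z$ with a large parameter, and apply the optimal estimate together with the duality identity $\|f\|^2$ versus the minimal solution. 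This yields an inequality $\int(\partial\bar\partial\psi)(\alpha,\alpha)\chi\, e^{-\phi-\psi}\ge0$ in the distributional sense. Continuity of $\psi$ lets us pass from $\psi_\delta$ to $\psi$ by uniform convergence, so $\psi$ is plurisubharmonic.

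(3) Once $\psi$ is plurisubharmonic, $-\log\delta_{D_\psi}$-type exhaustion follows. Using the condition $\mathring{\overline D}=D$ and the $L^2$-optimality of $D$ itself (take a constant fiber, or note that $\psi$ is bounded on compacts), $D$ is pseudoconvex by the known result for $L^2$-optimal domains. Let $\rho$ be a smooth psh exhaustion of $D$. Then $\max\{\rho(z),\ \log|w|+\psi(z)-\log(1-1/j)\}$, after regularization and a smoothing of $\psi$ on compacts, gives a continuous psh exhaustion of $D_\psi$, since $\log|w|+\psi(z)$ is plurisubharmonic. Hence $D_\psi$ is pseudoconvex.

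The main obstacle is step (2): extracting $\partial\bar\partial\psi\ge0$ from the optimal estimate without smoothness. Here the Bochner–Kodaira-type identity cannot be applied to $\psi$ directly. We would try the mollification with the test forms' support shrinking slower than $\delta$, and control the error terms through the uniform modulus of continuity of $\psi$ on a compact neighborhood of $z_0$.
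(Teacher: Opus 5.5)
\textbf{Genuine gap: step (2).} This step is where the whole difficulty lies, and the mechanism you sketch does not close. You rightly flag it as the main obstacle, but the proposed fix does not work. The optimal estimate is available only for the weight $\psi$ itself. After restricting to a complex disc, where every $(0,1)$-form is $\bar\partial$-closed so that $f=\varphi_{\zeta\bar\zeta}e^{\varphi+\psi}a\,d\bar\zeta$ is an admissible datum, duality gives only
\[
\int_{\mathbb D_r}\bigl(|\partial_{\bar\zeta}a|^2-\varphi_{\zeta\bar\zeta}|a|^2\bigr)e^{\varphi+\psi}\,d\lambda\ \ge\ 0
\qquad\text{for all } a\in C^\infty_c(\mathbb D_r)\text{ and all smooth strictly subharmonic }\varphi .
\]
To extract $\partial\bar\partial\psi$ from this you must substitute $a=e^{-\varphi-\psi}\chi$ and integrate by parts, which uses two derivatives of $\psi$. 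The inequality ``$\int(\partial\bar\partial\psi)(\alpha,\alpha)\chi e^{-\phi-\psi}\ge0$ in the distributional sense'' has no meaning, since it multiplies a second-order distribution by the merely continuous function $e^{-\psi}$.

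The linear mollification $\psi_\delta=\psi*\rho_\delta$ does not inherit the displayed inequality: its integrand has no sign, and $e^{\psi*\rho_\delta}\neq e^{\psi}*\rho_\delta$. Using $|\psi-\psi_\delta|\le\omega(\delta)$ on the relevant compact set, you only get, for $\chi\in C^\infty_c$,
\[
\int\Bigl(|\partial_\zeta\chi|^2+(\psi_\delta)_{\zeta\bar\zeta}|\chi|^2+\bigl(1-e^{-2\omega(\delta)}\bigr)\varphi_{\zeta\bar\zeta}|\chi|^2\Bigr)e^{-\varphi-\psi_\delta}\,d\lambda\ \ge\ 0 .
\]
Neither order of limits helps.
\begin{itemize}
\item The concentration argument needs $\varphi_{\zeta\bar\zeta}=s\to\infty$ at fixed $\delta$. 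The error term then has size about $s\,\omega(\delta)/c$ relative to the negativity $-c$ you want to detect, and swamps it.
\item If instead $\varphi,\chi$ are fixed and $\delta\to0$, then $\int(\psi_\delta)_{\zeta\bar\zeta}|\chi|^2e^{-\varphi-\psi_\delta}$ equals $\int|\partial_\zeta\psi_\delta|^2|\chi|^2e^{-\varphi-\psi_\delta}$ plus a convergent term. This positive term can blow up for a merely continuous $\psi$, so no contradiction appears.
\end{itemize}
A uniform modulus of continuity cannot control either effect.

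\textbf{The missing idea: regularize $e^{\psi}$, not $\psi$.} The displayed inequality is \emph{linear} in $e^{\psi}$, the data $a,\varphi$ do not depend on $\psi$, and it holds on every translated disc $L+\xi\subset D$. Multiplying by $\rho_\varepsilon(\xi)$ and integrating in $\xi$ shows that it holds with $e^{\psi}$ replaced by $e^{\psi}*\rho_\varepsilon$ on discs in $D_\varepsilon$. Now $p=\log(e^{\psi}*\rho_\varepsilon)$ is smooth. So the substitution $a=e^{-\varphi-p}\chi$, the integration by parts, and the concentrating weights $\varphi_s=s(|\zeta|^2-r_1^2/4)$ legitimately give $p_{\zeta\bar\zeta}\ge0$. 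Hence $\log(e^{\psi}*\rho_\varepsilon)$ is psh on $D_\varepsilon$. Since $\psi$ is continuous, these functions converge to $\psi$ locally uniformly, so $\psi$ is psh. This is Proposition~\ref{prop:main-mollifier} of the paper.

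\textbf{A smaller gap in step (3).} $L^2$-optimality of $(D,\psi)$ does not give $L^2$-optimality of $(D,0)$. Local boundedness of $\psi$ is irrelevant here, and even a globally bounded $\psi$ only yields the estimate with constant $e^{\sup\psi-\inf\psi}$. So you must invoke the pseudoconvexity result for pairs, Theorem~\ref{thm:liu-zhang-char-domain}. After that, a continuous psh exhaustion of $D_\psi$ such as $\max\{\rho(z),-\log(-\log|w|-\psi(z))\}$, with $\rho$ a psh exhaustion of $D$, finishes the proof; the Hartogs-domain criterion works equally well. Your formula with $\log(1-1/j)$ is not an exhaustion.
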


Based on the preservation of \(L^2\)-optimality under restriction to complex submanifolds (see Proposition~3.3 in \cite{Liu-Xiao-Yang-Zhou}), the key step in the proof of Theorem~\ref{thm:main-1} is to show that the regularized function $\psi_\varepsilon
=
\textup{log} (e^\psi * \rho_\varepsilon)$
is plurisubharmoic on $D_\epsilon \subset D$.
Therefore, we can construct a pseudoconvex exhaustion of \(D_\psi\). 
The Behnke--Stein increasing-union theorem then proves the pseudoconvexity of \(D_\psi\).

\begin{rem}
This approach gives a new proof of the characterization of plurisubharmonic functions due to Deng--Ning--Wang under the sole assumption that \(\psi\) is continuous.
It therefore gives an affirmative answer to the following question posed by Deng, Ning, and Wang in \cite{DNW21}: ``\(\cdots\) it is natural to ask whether the regularity condition on \(\psi\) in Theorem~1.2 can be weakened to being continuous?''
\end{rem}

\begin{rem}
In recent work in \cite{Liu-25-1,Liu-25-2}, Liu gave an alternative approach to the question of Deng--Ning--Wang by establishing an optimal \(L^2\) extension theorem.
\end{rem}

\begin{rem}
The Hartogs-domain construction also has an interpretation in terms of singular Hermitian metrics. Since Nakano positivity and Griffiths positivity are equivalent for line bundles, the pseudoconvex exhaustion structure of \(D_{\varphi}\) gives a partial affirmative answer to a question posed by Inayama--Matsumura in \cite{Inayama-Matsumura}: ``Does the converse implication of Theorem~1.2 hold?'' More precisely, they asked whether singular Nakano positivity in the sense of \(L^2\)-estimates implies singular Nakano positivity in the sense of approximation.
In the line bundle case, the geometry of \(D_{\psi}\) gives a precise characterization of the latter notion.
\end{rem}

We next turn to the corresponding characterization of continuous convex functions in the real setting. We first recall the optimal estimate condition for the $d$-operator.
\begin{defn}\label{def:optimal $L^2$ estimate properties for function}
Let $\phi$ be a locally integrable real valued function on a domain $D\subset\mr^n$.
We say that $\phi$ satisfies \emph{the optimal $d-L^2$ estimate condition}, if for any $d$-closed smooth $1$-form $f=\sum^{n}_{i=1} f_{i}dx_i$ with compact support on $D$,
and any smooth strictly convex function $\psi$ on $D$, the equation $du=f$ can be solved on $D$ with the estimate
 $$
 \int_{D} |u|^2 e^{-\phi-\psi} d\lambda  \leq \int_{D}\sum^{n}_{i,j=1}\psi^{ij}f_{i}f_{j}e^{-\phi-\psi}   d\lambda,
 $$
 where $(\psi^{ij})_{n\times n}$ stands for the inverse of the matrix $(\psi_{ij})_{n\times n}$, with $\psi_{ij}=\frac{\pa^2 \psi}{\pa x_{i} \pa x_{j}}$.
\end{defn}

Motivated by the work of Deng-Ning-Wang and Deng-Ning-Wang-Zhou in \cite{DNW21} and \cite{DWZZ}
Deng-Zhang prove that any $C^2$-smooth function satisfying the optimal $d$-$L^2$ estimate condition must be convex (\cite{Deng-Zhang}). 
As an application of Theorem \ref{thm:main-1}, 
we extend this result to the continuous function, 
showing that the optimal $d$-$L^2$ estimate condition still implies convexity without the $C^2$-regularity assumption.

\begin{thm}\label{thm:convex}
Let $D$ be a bounded convex domain in $\mr^n$, and let $\phi:D\to\mathbb R$ be continuous. If $\phi$ satisfies the optimal $d$-$L^2$ estimate condition on $D$, then $\phi$ is convex.
\end{thm}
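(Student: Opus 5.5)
The plan is to pass from the real setting to the complex one through a tube domain, and then apply Theorem~\ref{thm:main-1}. Put
\[
T_D=D+i\mathbb R^n\subset\mathbb C^n,
\]
and define $\tilde\phi(x+iy)=\phi(x)$. The function $\tilde\phi$ is continuous on $T_D$. If $\tilde\phi$ is plurisubharmonic on $T_D$, then $\phi$ is convex. Indeed, a psh function that does not depend on $y$ satisfies $\Delta_L\tilde\phi\geq 0$ on every complex line $z=a+\zeta v$ with $v\in\mathbb R^n$. Restricted to such a line, $\tilde\phi$ is a function of $\mathrm{Re}\,\zeta$ alone, so it is a subharmonic function of one real variable, hence convex. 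It follows that $\phi$ is convex along every real line, so $\phi$ is convex.

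Since $T_D$ is unbounded, it cannot be used directly. Instead I would work with the bounded pieces
\[
T_{D,R}=D+i(-R,R)^n .
\]
Each is bounded, and $\mathring{\overline{T_{D,R}}}=T_{D,R}$ because $D$ is convex and hence has this interior property. It is enough to show that each pair $(T_{D,R},\tilde\phi)$ is $L^2$-optimal in the sense of Definition~\ref{def:L^2 peroperty}. Theorem~\ref{thm:main-1} then gives that $\tilde\phi$ is psh on every $T_{D,R}$, hence on $T_D$, and convexity of $\phi$ follows as above.

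The main obstacle is proving the $L^2$-optimality of $(T_{D,R},\tilde\phi)$ from the real $d$-$L^2$ hypothesis, for an arbitrary smooth strictly psh weight $\varphi$ on $\mathbb C^n$. The hypothesis only covers weights that depend on $x$ alone and forms that do not depend on $y$. Two routes seem possible.
\begin{enumerate}
\item \emph{Reduce to the real estimate via Fourier analysis in $y$.} Take the Fourier transform in $y$, as in Deng--Zhang's real/complex correspondence. For $x$-dependent weights, $\dbar u=f$ on the tube decouples into real equations of the form $du+\xi u\,dx=f$, which are the real $d$-equations twisted by $e^{\langle x,\xi\rangle}$. The linear term $\langle x,\xi\rangle$ can be absorbed into the convex weight $\psi$, since adding a linear function does not change the Hessian.
\item \emph{Avoid the full $L^2$-optimality.} Go back to the proof of Theorem~\ref{thm:main-1} and notice that it only uses the optimal estimate against particular test data. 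These are essentially the weights $|z|^2$-type plus localized data used to show that $\log(e^{\tilde\phi}*\rho_\varepsilon)$ is psh. For $y$-invariant $\tilde\phi$ and $y$-averaged test data, those estimates should follow from the real hypothesis by the Fourier argument of the first route.
\end{enumerate}
I would try the first route first. Its difficulty is that a general $\varphi$ depends on $y$ and does not decouple. If that fails, I would fall back on the second route and rely on the restriction to the $y$-invariant test data that the proof of Theorem~\ref{thm:main-1} actually requires. Approximating compactly supported real forms, and handling the truncation in $y$ with cutoffs whose error vanishes as $R\to\infty$, should be routine by comparison.
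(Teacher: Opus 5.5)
\textbf{There is a genuine gap.} The outer steps of your reduction are fine: a psh function independent of $y$ is convex, and $T_{D,R}$ is bounded with $\mathring{\overline{T_{D,R}}}=T_{D,R}$. But everything rests on the $L^2$-optimality of $(T_{D,R},\tilde\phi)$, and you do not prove it.

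This claim is as hard as the theorem. If $\phi$ is convex, then $\tilde\phi$ is psh on the convex domain $T_{D,R}$, and H\"ormander's theorem gives the optimal estimate. So the complexification by itself reduces nothing.

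Your first route fails on both requirements of the definition. $L^2$-optimality demands the sharp estimate for every smooth strictly psh weight on $\mathbb C^n$, including $y$-dependent ones. It also demands it for every $\bar\partial$-closed $f$ on the bounded domain $T_{D,R}$. Fourier decoupling into twisted real equations of the form $d(e^{-\langle x,\xi\rangle}\hat u)=2e^{-\langle x,\xi\rangle}\hat f$ needs a $y$-independent weight and the full, $y$-translation-invariant tube $D+i\mathbb R^n$. Neither holds here.

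Cutoffs in $y$ do not repair this, so the step is not routine. A form on $T_{D,R}$ need not extend to a $\bar\partial$-closed form on the whole tube. Multiplying by a cutoff destroys $\bar\partial$-closedness, and the resulting error terms are incompatible with the constant $1$ in the definition.

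Your second route does not avoid this as stated. The proof of Theorem~\ref{thm:main-1} feeds in exactly the data the real hypothesis cannot supply. Proposition~\ref{prop:line-localization} uses the $y$-dependent weights $\Phi_k=\varphi(\zeta)+k(|z'|^2+\tau(|\zeta|^2))$ and the non-$y$-invariant forms $F=\bar\partial(\chi a)$. Proposition~\ref{prop:main-mollifier} uses $\varphi_s=s(|\zeta|^2-r_1^2/4)$. You could switch to $y$-independent strictly psh weights such as $k|x'|^2$ and $s(\mathrm{Re}\,\zeta)^2$. These give no concentration in the imaginary directions, so you would have to redo both propositions on the unbounded tube, with separate control of the $\partial_{\mathrm{Im}\,\zeta}\chi$ terms.

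At that point the complex detour buys nothing. The paper instead indicates running the argument directly in $\mathbb R^n$:
\begin{enumerate}
\item Restrict the $d$-$L^2$ condition to real line segments.
\item Duality and Cauchy--Schwarz give $\int(|a'|^2-\psi''|a|^2)e^{\psi+\phi}\,dx\geq0$ for $a\in C_c^\infty$ and strictly convex $\psi$.
\item Averaging translates against $\rho_\varepsilon$ replaces $e^\phi$ by $e^\phi*\rho_\varepsilon$.
\item Set $a=e^{-\psi-p_\varepsilon}\chi$ with $p_\varepsilon=\log(e^\phi*\rho_\varepsilon)$. Integrating by parts yields $\int(|\chi'|^2+p_\varepsilon''\chi^2)e^{-\psi-p_\varepsilon}\,dx\geq0$.
\item Concentrating weights $\psi_s$ force $p_\varepsilon''\geq0$. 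So $p_\varepsilon$ is convex on $D_\varepsilon$, and $\phi=\lim_{\varepsilon\to0}p_\varepsilon$ is convex.
\end{enumerate}
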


The proof is parallel to Theorem \ref{thm:main-1}, 
so we only state the main idea. For a standard
mollifier $\rho_{\epsilon}$, consider the log-exp regularization $\textup{log}(e^{\phi} * \rho_{\epsilon})$. 
Repeating the localization and $L^2$-duality argument in the real setting, 
the optimal $d-L^2$ estimate implies
that $p_{\epsilon}$ is smooth and convex on $D_{\epsilon}$. 
We omit the details of the proof.

\begin{rem}
The approach developed in the proof of Theorem~\ref{thm:main-1} also suggests a natural extension to the positivity of Hermitian metrics on holomorphic vector bundles. 
More precisely, 
let \(E\) be a holomorphic vector bundle over a Stein manifold \(X\), equipped with a continuous Hermitian metric \(h\). If \((E,h)\) satisfies the optimal \(L^2\)-estimate condition introduced in \cite{DWZZ}, 
then the associated dual unit disk bundle
\[
\Omega_h
=
\left\{
(x,\xi)\in E^*:
\|\xi\|_{h^*}^2<1
\right\}
\]
admits a pseudoconvex exhaustion. Moreover, \((E,h)\) is Griffiths semipositive.

This give a new approach to Question 7.3. in \cite{In20}.
Since the present note focuses on the line bundle condition, 
we will give a detailed of the above resluts in the furthercoming work.
\end{rem}

The rest of this note is organized as follows. 
In \S~ \ref{sec:log-exp-regularization}, we establish the plurisubharmonicity of the log-exp regularizations by combining restriction to complex lines with a one-dimensional dual estimate. In \S~\ref{sec:4}, we use a decreasing sequence of smooth plurisubharmonic approximations to construct an increasing pseudoconvex exhaustion of the associated Hartogs domain and complete the proof of Theorem~\ref{thm:main-1}.

\textbf{Acknowledgement:}
The author is grateful to Professor Fusheng Deng, his Ph.D. advisor, for introducing him to the \(L^2\) converse theory and for his valuable suggestions on related topics. 
The authors is also grateful to Professor Zhuo Liu to draw author's attention to the Hartogs domain in the previous joint work in \cite{Liu-Zhang}.
The author was supported by the National Natural Science Foundation of China (No.12501102), Guangxi Natural Science Foundation (Young Talents Progrem 2026) and the Tianyuan Mathematical Center in Southwest China, Guangxi Base.

\section{log-exp regularization for $L^2$-optimal function}\label{sec:log-exp-regularization}

\subsection{Notation and weighted $L^2$-spaces}

We first fix the notation for weighted $L^2$-spaces and the $\bar\partial$-operator used throughout this note.

Let $D$ be a domain in $\mc^n$.
We denote by $\wedge^{0,q}(D)$ the space of  $(0,q)$-forms on $D$, for any $0\leq q\leq n$ ((0,0)-forms are just smooth functions), and
$\wedge^{0,q}_c(D)$ the elements in
$\wedge^{0,q}(D)$ with compact support.
Let $\varphi$ be a real-valued continuous function on $D$.
Given $\alpha=\sum_I\alpha_Id\bar z_I, \beta=\sum_I\beta_Id\bar z_I\in \wedge^{0,q}(D)$, we define the products of $\alpha$ and $\beta$ and the corresponding norm with respect to $\varphi$ as follows:
$$
\bal
\inner{\alpha, \beta}_{\varphi} &= \sum_I \alpha_I \cdot \overline\beta_I e^{-\varphi},
|\alpha|^2_{\varphi}=\inner{\alpha,\alpha}_{\varphi}. \\
\iinner{\alpha, \beta}_{\varphi} &=\int_D \inner{\alpha,\beta}e^{-\varphi} d\lambda,
\|\alpha \|^2_{\varphi}=\iinner{\alpha,\alpha}_{\varphi}.
\eal
$$
Let $L^2_{(0,q)} (D,\varphi)$ be the completion of $\wedge^{0,q}(D)$ with respect to the inner product $\iinner{\cdot,\cdot}_{\varphi}$,
then $L^2_{(0,q)}(D,\varphi)$ is a Hilbert space and
$\dbar:L^2_{(0,q)}(D,\varphi)\ra L^2_{(0,q+1)}(D,\varphi)$ is a  closed and densely defined operator.
Let $\dbar_\varphi^{*}$ be the Hilbert adjoint of $\dbar$.
For convenience,
we denote $L^2 (D, \varphi):=L^2_{(0,0)} (D, \varphi)$.

\subsection{Plurisubharmonic functions and log-exp regularization}

We begin by recalling the definitions of plurisubharmonicity and mollification.

\begin{defn}
	Let $\Omega$ be a domain in $\mc^n$
A function \(u : \Omega \longrightarrow [ - \infty , + \infty ) \) defined on an open subset \(\Omega \subset \mathbb { C } ^ { n }\) is said to be plurisubharmonic (psh for short) if

\begin{enumerate}[(i)]
\item
  u is upper semicontinuous ;
\item
  for every complex line \(L \subset \mathbb { C } ^ { n }\),
 \(u\big| _ { \Omega \cap L }\) is subharmonic on \(\Omega \cap L\) , that is, for all \(a \in \Omega\) and \(\xi \in \mathbb { C } ^ { n }\) with \(| \xi | < d ( a , \pa \Omega )\) , the function u satisfies the mean value inequality:
\end{enumerate}

\[
u (a) \leqslant \frac {1}{2 \pi} \int_ {0} ^ {2 \pi} u (a + e ^ {\mathrm{i} \theta} \xi) d \theta .
\]

The set of psh functions on \ensuremath{\Omega} is denoted by Psh(\ensuremath{\Omega}).
\end{defn}

\begin{defn}
Let $m\geq1$. A \emph{mollifier} is a nonnegative function
$\rho\in C_c^\infty(\mathbb R^m)$ satisfy the following:
\begin{enumerate}
  \item $\operatorname{supp}\rho\subset B(0,1)$
  \item $\displaystyle \int_{\mathbb R^m}\rho(x)dx=1$.
\end{enumerate}
For $\varepsilon>0$, 
set $\rho_\varepsilon(x)
=
\varepsilon^{-m}\rho\left(\frac{x}{\varepsilon}\right).$
The family $\{\rho_\varepsilon\}_{\varepsilon>0}$ is called the
associated family of mollifiers, or an approximate identity.

If $\Omega\subset\mathbb R^m$ is a domain and
$f\in L^1_{\mathrm{loc}}(\Omega)$, the $\varepsilon$-mollification of
$f$ is defined by
\[
f_\varepsilon(x)
:=
(f*\rho_\varepsilon)(x)
=
\int_{\mathbb R^m}f(x-y)\rho_\varepsilon(y)dy,
\]
whenever $B(x,\varepsilon)\subset\Omega$.
\end{defn}

Inspired by Theorem 5.6 in Chapter 1 of \cite{Demaily},
one can prove 
the following log-exponential regularization
result for plurisubharmonic functions.
Since the argument is standard, we omit the proof.

\begin{prop}
\label{prop:exp-regularization-org}
Let $\Omega$ be a domain in $\mc^n$,
and let
$u\in\operatorname{Psh}(\Omega)$ with $u\not\equiv-\infty$.
Let $\{\rho_\varepsilon\}_{\varepsilon>0}$ be a family of
standard nonnegative  mollifiers. 
Set
$\Omega_\varepsilon
:=\{z\in\Omega:\operatorname{dist}(z,\partial\Omega)
>\varepsilon\}$ and
$u_\varepsilon:=\log(e^u*\rho_\varepsilon)$
on $\Omega_\varepsilon$.
Then $u_\varepsilon$ is smooth and plurisubharmonic
on $\Omega_\varepsilon$. Moreover,
$u_\varepsilon$ converges pointwise to $u$ on $\Omega$
as $\varepsilon\to0$.
\end{prop}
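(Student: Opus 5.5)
The plan is to reduce everything to the classical fact that $e^u$ is plurisubharmonic and then to handle smoothness, plurisubharmonicity of the logarithm, and convergence separately.

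First, since $u$ is psh and $t\mapsto e^t$ is convex and increasing, $v:=e^u$ is psh on $\Omega$, nonnegative, and $v\not\equiv0$. Being psh, $v$ is locally bounded above, hence in $L^1_{\mathrm{loc}}(\Omega)$. Consequently $v*\rho_\varepsilon$ is smooth on $\Omega_\varepsilon$. It is psh there, because
\[
v*\rho_\varepsilon(z)=\int v(z-y)\rho_\varepsilon(y)\,d\lambda(y)
\]
is an average, with nonnegative weights, of the translates $v(\cdot-y)$. Each translate satisfies the sub-mean value inequality on complex discs, so Fubini gives the same inequality for the average.

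Next we need $v*\rho_\varepsilon>0$ on $\Omega_\varepsilon$, so that the logarithm is smooth. I would take $\rho$ radial with $\rho>0$ on $B(0,1)$, as for a standard mollifier. Since $u\not\equiv-\infty$, the set $\{u=-\infty\}$ is Lebesgue-null, so $v>0$ almost everywhere and $v*\rho_\varepsilon(z)>0$. Hence $u_\varepsilon=\log(v*\rho_\varepsilon)$ is smooth on $\Omega_\varepsilon$.

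The main obstacle is plurisubharmonicity of $u_\varepsilon$, because the logarithm of a psh function need not be psh. I would use the characterization: for $w>0$, $\log w$ is psh if and only if $w|e^{p(z)}|$ is psh for every complex linear (or affine) function $p$. Indeed, in one variable, $\log w$ is subharmonic iff $we^{\operatorname{Re}(az)}$ is subharmonic for all $a\in\mathbb C$. For a smooth $w$, a direct computation shows
\[
\Delta\big(we^{\operatorname{Re}(az)}\big)\ge0\ \ \forall a \quad\Longleftrightarrow\quad w\Delta w\ge|\nabla w|^2 \quad\Longleftrightarrow\quad \Delta\log w\ge0,
\]
where the middle condition is obtained by optimizing over $a$. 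Now fix an affine holomorphic $p$. The function
\[
(v*\rho_\varepsilon)(z)\,|e^{p(z)}|=\int e^{u(z-y)+\operatorname{Re}p(z-y)}\,e^{\operatorname{Re}p(y)-\operatorname{Re}p(0)}\rho_\varepsilon(y)\,d\lambda(y)
\]
uses linearity of $\operatorname{Re}p$ up to a constant. The factor $e^{u+\operatorname{Re}p}$ is psh, since it is the exponential of a psh function, and the remaining weights are nonnegative. So this integral is psh, being a nonnegative average of translates as before. The criterion, applied on each complex line, then gives $u_\varepsilon\in\operatorname{Psh}(\Omega_\varepsilon)$.

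Finally, for convergence, fix $z\in\Omega$. Upper semicontinuity gives $\sup_{B(z,\varepsilon)}v\to v(z)$, hence $\limsup_{\varepsilon\to0} v*\rho_\varepsilon(z)\le v(z)$. For the reverse inequality, the standard argument for psh functions with radial $\rho$ uses the sub-mean value property on spheres (integrating in polar coordinates) to get $v*\rho_\varepsilon(z)\ge v(z)$. Therefore $v*\rho_\varepsilon(z)\to v(z)$, and taking logarithms gives $u_\varepsilon(z)\to u(z)$, including the case $u(z)=-\infty$.
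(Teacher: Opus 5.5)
Your argument is correct. The paper does not actually prove this proposition: it calls the argument standard and points to Demailly's book (Chapter~1, Theorem~5.6). Your smoothness, averaging and convergence steps are exactly that theorem applied to the psh function $v=e^u$.

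The one ingredient that theorem does not contain is the plurisubharmonicity of the logarithm. You supply it correctly through the criterion that, for smooth $w>0$, $\log w$ is psh if and only if $w\,e^{\operatorname{Re}p}$ is psh for every affine holomorphic $p$. Your minimization over $a$ gives exactly $w\,\Delta w\ge|\nabla w|^2$, and linear $p$ realize every slope $a$ on a given complex line.

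This is the same strategy the paper itself follows in Proposition~\ref{prop:main-mollifier}. There the integral inequality $\int(|\bar\partial a|^2-\Delta\varphi\,|a|^2)\,e^{\varphi}\,w\,d\lambda\ge 0$, for all $a$ and all strictly subharmonic $\varphi$, plays the role of your criterion. Each condition cuts out a convex cone of weights $w$ that is stable under translations, hence under convolution with $\rho_\varepsilon\ge0$. Each also forces $\log w$ to be subharmonic once $w$ is smooth. Your pointwise criterion is more elementary, but it needs $e^{u+\operatorname{Re}p}$ to be psh from the outset. The integral version is what $L^2$-optimality provides when plurisubharmonicity is the conclusion rather than the hypothesis.

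A few minor points.
\begin{enumerate}
\item Strict positivity of $\rho$ on $B(0,1)$ is not needed. Since $v>0$ almost everywhere and $\{\rho_\varepsilon>0\}$ has positive measure for any mollifier, $v*\rho_\varepsilon>0$ anyway.
\item The translation identity for $\operatorname{Re}p$ can be skipped, because $z\mapsto e^{u(z-y)+\operatorname{Re}p(z)}$ is already psh for each fixed $y$.
\item The symmetry of $\rho$ is genuinely used in $v*\rho_\varepsilon\ge v$, and the paper's definition of a mollifier does not impose it. Without it this inequality can fail: take $u=\operatorname{Re}z_1$ and $\operatorname{supp}\rho\subset\{\operatorname{Re}y_1>0\}$. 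So your reading of ``standard'' as radial is what makes the convergence step work as written. Invariance under $y\mapsto e^{i\theta}y$ would already suffice, and then the mean value inequality on complex circles from the paper's definition of psh gives $v*\rho_\varepsilon\ge v$ directly.
\end{enumerate}
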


\begin{rem}
Proposition~\ref{prop:exp-regularization-org} shows that
log-exponential regularization preserves plurisubharmonicity.
\end{rem}

\subsection{Restriction to complex lines and dual inequality}

We now turn to an $L^2$-optimal pair $(D,\psi)$.
First,
we recall the following restriction property, which is a special case of Proposition~3.3 in \cite{Liu-Xiao-Yang-Zhou}.

\begin{prop}\label{prop:line-localization}
Let $D$ be a bounded domain in $\mc^n$, 
$\psi$ be a real-valued continuous function defined on $D$.
Assume the pair $(D,\psi)$ is $L^2$-optimal,
then 
$(L,\psi|_L)$ is $L^2$-optimal for any complex disc $L$ contained in $D$.
\end{prop}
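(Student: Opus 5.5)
The plan is to check the definition of $L^2$-optimality for $(L,\psi|_L)$ directly, by building solutions on the disc from solutions on $D$. Write $L=\{a+t\xi: t\in\Delta\}$ with $\Delta\subset\mathbb C$ a disc and $|\xi|=1$. After a unitary change of coordinates and a translation, we may assume $L=\{(t,0,\dots,0):t\in\Delta\}$, with $\overline{L}$ compact in $D$ after shrinking slightly. We then pass to the limit in the radius. Both $L^2$-optimality and the weighted Lebesgue measure are invariant under unitary maps, so this normalization is harmless.

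Fix a smooth strictly subharmonic $\phi_1$ on $\mathbb C$ and a $\bar\partial$-closed $f=g\,d\bar t\in L^2(\Delta,\phi_1+\psi)$. By density it suffices to take $g$ smooth on a slightly larger disc, since the estimate passes to weak limits.

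\textbf{Step 1 (extension of data).} Write $z=(t,z')$. Define
\[
\phi(z)=\phi_1(t)+N|z'|^2,\qquad F=\chi(z')\,g(t)\,d\bar t ,
\]
where $\chi$ is a cutoff in $z'$ supported in a tiny ball of radius $\delta$. Then $\bar\partial F$ has components $g\,\partial_{\bar z_j}\chi$, so $F$ is not closed. To fix this, take $\chi$ holomorphic-free, or better, take $F=g(t)\,d\bar t$ on the thin tube $\Delta\times B(0,\delta)\subset D$. This $F$ is genuinely $\bar\partial$-closed. The tube domain, however, is not $D$.

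To stay on $D$, I would use a weight $\psi+ N\max(|z'|^2-\delta^2,0)$-type penalty. It cannot be added to $\psi$, but it can be absorbed into $\phi$: choose $\phi=\phi_1(t)+N\eta(|z'|^2)$ with $\eta$ convex, $\eta$ small near $0$ and large outside $B(0,\delta)$. Take $F=\tilde g\,d\bar t$, where $\tilde g$ extends $g$ independently of $z'$ on a product neighborhood. On the part of $D$ outside the product, I will need $\bar\partial$-closedness, which is the main obstacle. I would handle it by solving a correction: write $F=\chi g\,d\bar t - \bar\partial$-correction via Hörmander on a pseudoconvex neighborhood. Alternatively, and I expect this is the cleaner route, I would use the standard duality argument. $L^2$-optimality is equivalent to the dual inequality
\[
|\langle f,\alpha\rangle_{\phi+\psi}|^2\le \int \langle B_\phi^{-1}f,f\rangle e^{-\phi-\psi}\,\|\bar\partial^*_{\phi+\psi}\alpha\|^2,
\]
for compactly supported test forms $\alpha$ on $D$. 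I would apply it with $\alpha=\chi(z')\beta(t)\,d\bar t$.

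\textbf{Step 2 (limit).} Here $B_\phi^{-1}$ in the $d\bar t$ direction is $(\phi_1)_{t\bar t}^{-1}$, and the $z'$-directions are decoupled. Normalizing $\chi^2e^{-N\eta}$ to a probability density concentrating at $z'=0$ as $N\to\infty$, $\delta\to0$, the continuity of $\psi$ gives
\[
\int_D(\cdots)e^{-\psi}\,\chi^2e^{-N\eta}\,d\lambda\to \int_\Delta(\cdots)e^{-\psi|_L}\,d\lambda_1 .
\]
The terms of $\|\bar\partial^*\alpha\|^2$ involving $\partial\chi$ must vanish in the limit, which requires $\chi$ to vary slowly relative to the concentration scale. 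This yields the one-variable dual inequality on $\Delta$, and Riesz representation on $\Delta$ produces $u$ with the optimal estimate. Controlling the $\nabla\chi$ error uniformly, using only continuity of $\psi$, is the step I expect to be delicate.
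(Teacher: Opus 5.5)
\textbf{The gap.} It is the obstacle you flag in Step 1, and switching to duality does not remove it. The hypothesis on $(D,\psi)$ only applies to $\bar\partial$-closed data on $D$. The dual inequality you quote comes from pairing $\bar\partial u=f$ with $\alpha$, so it too holds only for $\bar\partial$-closed $f$ on $D$. For non-closed $f$ it genuinely fails: for $\alpha=\bar\partial^*\gamma$ the right-hand side vanishes while the left-hand side is $|\langle\bar\partial f,\gamma\rangle|^2$. Its right-hand side also integrates $\langle B_\phi^{-1}f,f\rangle$ over all of $D$. So you need a global closed form on $D$ that agrees with $g\,d\bar t$ on the support of $\alpha$ and has negligible weighted norm elsewhere.

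Your proposal never produces one:
\begin{enumerate}
\item $\chi(z')g\,d\bar t$ is not closed.
\item The tube form lives on $\Delta\times B(0,\delta)$, and optimality does not pass to that subdomain a priori; that is essentially what is being proved.
\item The H\"ormander correction is left unspecified: on which domain it is solved, how it extends to $D$, and what it does to $\int_D\langle B_\phi^{-1}F,F\rangle e^{-\phi-\psi}$.
\end{enumerate}

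\textbf{The missing construction.} The paper solves $\partial_{\bar t}a=g$ on all of $\mathbb C$ in one variable and sets $F=\bar\partial\bigl(\chi_1(t)\chi_2(z')a(t)\bigr)$. Here $\chi_1\chi_2$ has compact support in $D$ and $\chi_1\chi_2\equiv1$ on a product $\mathbb D_{r_1}\times B_{\delta_1}$ with $\overline\Delta\subset\mathbb D_{r_1}$. This $F$ is automatically closed and compactly supported. It equals $g\,d\bar t$ on $\mathbb D_{r_1}\times B_{\delta_1}$ and is bounded on the rest of its support, which lies in $\{|z'|\ge\delta_1\}\cup\{|t|\ge r_1\}$.

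Since $a$ is in general not compactly supported in $t$, the weight must penalize the $t$-direction as well. The paper uses $\Phi_k=\phi_1(t)+k\bigl(|z'|^2+\tau(|t|^2)\bigr)$, with $\tau$ convex and nondecreasing, $\tau(|t|^2)=0$ for $t\in\Delta$, and $\tau(|t|^2)\ge1$ for $|t|\ge r_1$. This keeps $B_{\Phi_k}^{-1}$ bounded: its $d\bar t$-entry is at most $1/(\phi_1)_{t\bar t}$. It also makes the contribution off the product $O(e^{-c_0k})$, negligible against the normalization $c_k\sim(\pi/k)^{n-1}$. Your weight $\phi_1(t)+N\eta(|z'|^2)$ has no such $t$-penalty.

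\textbf{Two smaller points.} First, the $\nabla\chi$ error you expect to be delicate does not arise. Your $\alpha$ has only a $d\bar t$-component and $\chi$ depends only on $z'$, so $\bar\partial^*\alpha$ involves only $\partial_t$. The real issue is that $\bar\partial^*_{\phi+\psi}(\chi\beta\,d\bar t)$ contains $\chi\beta\,\partial_t\psi$, which is meaningless for merely continuous $\psi$; such $\alpha$ need not lie in the domain of the adjoint. You should take $\alpha=e^{\psi}\chi\beta\,d\bar t$ instead, for which $\bar\partial^*_{\phi+\psi}\alpha=e^{\psi}\chi\,(\beta\,\partial_t\phi_1-\partial_t\beta)$, in the spirit of Proposition~\ref{prop:one-dimensional-dual}.

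Second, with the closed $F$ above and these test forms, your concentration-plus-duality argument does go through, followed by Riesz representation on $\Delta$. It is then a legitimate alternative to the paper's route. The paper instead picks a good slice $z'=\xi_k$ by averaging, restricts the solution $U_k$ (smooth because $U_k-G$ is holomorphic), and passes to a normal-families limit.
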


\begin{proof}
Any complex line disc \(L\subset D\) can be written as
\[
L
=
\left\{
z_0+\zeta v : |\zeta|<r
\right\},
\]
where \(z_0\in D\), \(r>0\), and \(v\in\mathbb C^n\) is a unit vector, i.e.,$|v|=1.$
After a unitary affine change of coordinates, we may assume
\[
z=(\zeta,z')\in \mathbb C\times \mathbb C^{n-1},
\qquad
L=\mathbb D_r\times\{0\},
\]
where $\mathbb D_r:=\{\zeta\in\mathbb C:|\zeta|<r\}.$
Set $\psi_L(\zeta)=\psi(\zeta,0).$

For any $(0,1)$-form $g(\zeta)d\bar\zeta
\in \wedge^{0,1}_{c}(L)$,
since $\dim_{\mc} L=1$, we have $g(\zeta)d\bar\zeta \in \Ker (\dbar)$.
Extend \(g\) by zero to a smooth compactly supported function on \(\mathbb C\). Then there exists \(a\in C^\infty(\mathbb C)\) satisfy  
$$
\bar\partial a=g(\zeta)d\bar\zeta.
$$
Since \(L\subset D\), we can choose $r<r_0<r_1<r_2,
0<\delta_1<\delta_2,$
such that $\overline{\mathbb D}_{r_2}
\times
\overline{B}_{\delta_2}^{n-1}
\subset D.$
Choose cut-off functions $\chi_1\in C_c^\infty(\mathbb D_{r_2}),
\chi_2\in C_c^\infty(B_{\delta_2}^{n-1}),$
satisfying
\[
\chi_1\equiv1
\quad\text{on }\mathbb D_{r_1},
\qquad
\chi_2\equiv1
\quad\text{on }B_{\delta_1}^{n-1},
\]
and set $\chi(\zeta,z')
:=
\chi_1(\zeta)\chi_2(z').$
Then the function $a$ can be viewed as a function
on $\mc^n$ that is independent of $z'$.
Set $G(\zeta,z')
=
\chi(\zeta,z')a(\zeta),
F=\bar\partial G.$
Then \(F\) is a smooth, compactly supported, \(\bar\partial\)-closed \((0,1)\)-form on \(D\)
and $F
=
g(\zeta)d\bar\zeta
\;
\text{on} \;
\mathbb D_r\times B_{\delta_1}^{n-1}.$

Choose a smooth, convex, nondecreasing function
$\tau:[0,\infty)\to[0,\infty)$ such that
$\tau(t)=0$ for $0\le t\le r_0^2$ and
$\tau(t)\ge 1$ for $t\ge r_1^2$.
Set $q(\zeta,z')
:=
|z'|^2+\tau(|\zeta|^2)$，
then \(q\) is
plurisubharmonic on $\mc^n$ and $q(\zeta,z')=|z'|^2$ for $|\zeta|<r.$ since $r<r_0$.

For any smooth strictly subharmonic function $\varphi$ defined on $\mc$,
set 
$\Phi_k(\zeta,z')=\varphi(\zeta)+kq(\zeta,z')$ for any $k \in \mathbb{N}^{*}$,
then $\Phi_k$ is a smooth strictly plurisubharmonic function on
$\mc^n$. 

Since $(D,\psi)$ is $L^2$-optimal, 
there exists
$U_k\in L^2(D,\Phi_k+\psi)$ such that 
$\dbar U_k=F$
and
\begin{equation}\label{eq:ambient-optimal}
\int_D
|U_k|^2e^{-\Phi_k-\psi}d\lambda
\leq
\int_D
\left\langle
B_{\Phi_k}^{-1}F,F
\right\rangle
e^{-\Phi_k-\psi}d\lambda.
\end{equation}
Since $F=\dbar G$, the distribution $U_k-G$ is holomorphic on $D$.
After changing $U_k$ on a $L^2$-negligible set if necessary,
we may assume that
$U_k$ is smooth.

Since $F=g(\zeta)d\bar\zeta$, $q=|z'|^2$
on $\operatorname{supp}g\times B_{\delta_1}^{n-1}$ 
and $\varphi$ is strictly subharmonic.
There exists a constant $C>0$,
independent of $k$, such that
\begin{equation}\label{eq:B-inverse-main}
\left\langle
B_{\Phi_k}^{-1}F,F
\right\rangle
=
\frac{|g(\zeta)|^2}
{\varphi_{\zeta\bar\zeta}(\zeta)} \leq C|F|^2 <+\infty.
\end{equation}

 Since \(F\) is smooth, compactly supported,
 $D$ is bounded and  
 the continuity of
$\psi$,
the integration tends to $0$
outside $\mathbb D_r\times B_{\delta_1}^{n-1}$,
thus there exists $c_0>0$ such that 
\begin{equation}\label{eq:Rk-main}
  \begin{aligned}
  \int_D
\left\langle
B_{\Phi_k}^{-1}F,F
\right\rangle
e^{-\Phi_k-\psi}d\lambda
&=
\int_{\mathbb D_r\times B_{\delta_1}^{n-1}}
\frac{|g(\zeta)|^2}
{\varphi_{\zeta\bar\zeta}(\zeta)}
e^{-\varphi(\zeta)-\psi(\zeta,z')-k|z'|^2}
d\lambda(\zeta)d\lambda(z')\\
&
+O(e^{-c_0k}).
  \end{aligned}
\end{equation}
Set 
\[
c_k:=
\int_{|z'|<s_k}
e^{-k|z'|^2}d\lambda(z'),
\]
where $s_k:=k^{-1/4}.$
Then $s_k\to0$ and $c_k\sim\left(\frac{\pi}{k}\right)^{n-1}.$
Then by the Fubini's theorem,
for sufficiently large $k$, $s_k<\delta_1$,
$|\xi|<s_k$,
we have 
$$
\begin{aligned}
\int_{\mathbb D_r\times B_{\delta_1}^{n-1}} &|U_{k}|^2
e^{-\varphi(\zeta)-\psi(\zeta,z')-k|z'|^2}
d\lambda(\zeta)d\lambda(z') \\
& =\int_{|\xi|<s_k}e^{-k|\xi|^2}d\lambda(\xi)
\int_{\mathbb D_r}
|U_k(\zeta,\xi)|^2
e^{-\varphi(\zeta)-\psi(\zeta,\xi)}
d\lambda(\zeta) \\
& \leq \int_D  \left\langle
B_{\Phi_k}^{-1}F,F
\right\rangle
e^{-\Phi_k-\psi}d\lambda.
\end{aligned}
$$
Thus we can find 
$\xi_k\in\mc^{n-1}$ with $|\xi_k|<s_k$
such that
\begin{equation}\label{eq:slice-estimate}
\int_{\mathbb D_r}
|U_k(\zeta,\xi_k)|^2
e^{-\varphi(\zeta)-\psi(\zeta,\xi_k)}
d\lambda(\zeta)
\leq \frac{1}{c_k}.\int_D  \left\langle
B_{\Phi_k}^{-1}F,F
\right\rangle
e^{-\Phi_k-\psi}d\lambda.
\end{equation}
Since $\psi$ is continuous on the compact set $\overline{\mathbb D}_r\times\overline{B}_{\delta_1}^{n-1},$
by \eqref{eq:Rk-main},
we have 
\begin{equation}\label{eq:Rk-limit}
\lim_{k\to\infty}\frac{1}{c_k}.\int_D  \left\langle
B_{\Phi_k}^{-1}F,F
\right\rangle
e^{-\Phi_k-\psi}d\lambda.
=
\int_{\mathbb D_r}
\frac{|g(\zeta)|^2}
{\varphi_{\zeta\bar\zeta}(\zeta)}
e^{-\varphi(\zeta)-\psi_L(\zeta)}
d\lambda(\zeta).
\end{equation}

Set $u_k(\zeta):=U_k(\zeta,\xi_k),
\zeta\in\mathbb D_r.$
Since $|\xi_k|<\delta_1$ for large $k$, we have $\dbar u_k=f
$ on $\mathbb D_r.$

Hence $u_k-a \in \Ker (\dbar)$
 on $\mathbb D_r$,
by \eqref{eq:slice-estimate} and \eqref{eq:Rk-limit}, 
the sequence $\{u_k\}$ is bounded
in $L^2(\mathbb D_r)$.
Thus there exists $h\in\mathcal O(\mathbb D_r)$ such that 
$u_k-a\longrightarrow h$
uniformly on compact subsets of $\mathbb D_r$.

Set $u=a+h$
we have $\dbar u=g(\zeta)d\bar\zeta$,
by Fatou's lemma,
\begin{align*}
\int_{\mathbb D_r}
|u|^2e^{-\varphi-\psi_L}d\lambda
&\leq
\liminf_{k\to\infty}
\int_{\mathbb D_r}
|u_k|^2
e^{-\varphi-\psi(\zeta,\xi_k)}
d\lambda\\
&\leq \int_{\mathbb D_r}
\frac{|g(\zeta)|^2}
{\varphi_{\zeta\bar\zeta}(\zeta)}
e^{-\varphi(\zeta)-\psi_L(\zeta)}
d\lambda(\zeta).
\end{align*}
This proves $L^2$ optimal properties for
$g\in \wedge^{0,1}_{c}(L)$.

Furthermore,
for any $f\in L^2_{(0,1)}(L,\varphi+\psi_L)$
be arbitrary, satisfying
\[
\int_L
\langle B_\varphi^{-1}f,f\rangle
e^{-\varphi-\psi_L}d\lambda
<+\infty.
\]
We can choose $
f_\nu\in \wedge^{0,1}(D)$
converging to $f$ such that 
there exist $u_\nu$ satisfying
$\dbar u_\nu=f_\nu$
and
\[
\int_L
|u_\nu|^2e^{-\varphi-\psi_L}d\lambda
\leq
\int_L
\langle B_\varphi^{-1}f_{\nu},f_\nu \rangle
e^{-\varphi-\psi_L}d\lambda
\]

Thus $\{u_\nu\}$ is bounded in
$L^2(L,\varphi+\psi_L)$. Then there exists $u$ such that 
$u_\nu\rightharpoonup u$ with $\dbar u=f$  
satisfying
\[
\int_L
|u|^2e^{-\varphi-\psi_L}d\lambda
\leq
\int_L
\langle B_\varphi^{-1}f,f\rangle
e^{-\varphi-\psi_L}d\lambda.
\]
Therefore $(L,\psi|_L)$ is $L^2$-optimal.
\end{proof}

Next, we get a one-dimensional dual inequality
involving the exponential of the $L^2$-optimal weight.

\begin{prop}\label{prop:one-dimensional-dual}
Let $D$ be a bounded domain in $\mc^n$, 
$\psi$ be a real-valued continuous function defined on $D$.
Assume that $(D,\psi)$ is $L^2$-optimal,
then for any smooth strictly
subharmonic function $\varphi$ on $\mc$, 
for any complex disc $L$ contained in $D$,
we have
\begin{equation}\label{eq:dual}
\int_L
(|\dbar \alpha |^2-\Delta \varphi \cdot |a|^2 )e^{\varphi+\psi_{L}}d\lambda.
\end{equation}
holds for any
$a\in C_c^\infty(L)$, 
where $\psi_L=\psi |_{L}$.

\end{prop}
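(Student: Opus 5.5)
The displayed formula \eqref{eq:dual} has lost its relation sign. I read it as the nonnegativity statement
\[
\int_L\bigl(|\dbar a|^2-\Delta\varphi\,|a|^2\bigr)e^{\varphi+\psi_L}\,d\lambda\ \geq\ 0 ,
\]
with the $\alpha$ there a typo for $a$, and with $\Delta\varphi$ normalized as $\varphi_{\zeta\bar\zeta}$ to match $B_\varphi$ in Definition~\ref{def:L^2 peroperty}. The plan is to reduce to one variable and then run a duality (Cauchy--Schwarz) argument against a well-chosen right-hand side.

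\textbf{Step 1 (reduction to one variable).} By Proposition~\ref{prop:line-localization}, the pair $(L,\psi_L)$ is $L^2$-optimal. So for every smooth strictly subharmonic $\varphi$ on $\mc$ and every $f\in L^2_{(0,1)}(L,\varphi+\psi_L)$, there is $u$ on $L$ with $\dbar u=f$ and
\[
\int_L|u|^2e^{-\varphi-\psi_L}\,d\lambda\ \le\ \int_L\frac{|f|^2}{\varphi_{\zeta\bar\zeta}}\,e^{-\varphi-\psi_L}\,d\lambda .
\]

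\textbf{Step 2 (the pairing identity).} Fix $a\in C_c^\infty(L)$ and put $b=\bar a$. Write $f=f_1\,d\bar\zeta$ and use the unweighted pairing. Since $\dbar u=f$ in the sense of distributions and $b$ is a test function, integration by parts gives
\[
\int_L f_1\,\bar b\,d\lambda=-\int_L u\,\overline{\partial_\zeta b}\,d\lambda .
\]
Insert the weights $e^{\mp(\varphi+\psi_L)/2}$ and apply Cauchy--Schwarz, then the estimate from Step 1:
\[
\Bigl|\int_L f_1\bar b\,d\lambda\Bigr|^2\le\int_L\frac{|f_1|^2}{\varphi_{\zeta\bar\zeta}}e^{-\varphi-\psi_L}\,d\lambda\cdot\int_L|\partial_\zeta b|^2e^{\varphi+\psi_L}\,d\lambda .
\]
Note that $|\partial_\zeta b|=|\partial_{\bar\zeta}a|=|\dbar a|$.

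\textbf{Step 3 (choice of $f$).} Take $f_1:=\varphi_{\zeta\bar\zeta}\,b\,e^{\varphi+\psi_L}$. This is continuous and compactly supported in $L$, because $\psi$ is continuous on $D$. Hence $f\in L^2_{(0,1)}(L,\varphi+\psi_L)$, and the right-hand side of the optimal estimate is finite. With $I:=\int_L\varphi_{\zeta\bar\zeta}|a|^2e^{\varphi+\psi_L}\,d\lambda$, the two quantities become
\[
\int_L f_1\bar b\,d\lambda=I,\qquad \int_L\frac{|f_1|^2}{\varphi_{\zeta\bar\zeta}}e^{-\varphi-\psi_L}\,d\lambda=I .
\]
So Step 2 reads $I^2\le I\int_L|\dbar a|^2e^{\varphi+\psi_L}\,d\lambda$. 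If $I=0$ the claim is trivial; otherwise divide by $I>0$, which gives $\int_L|\dbar a|^2e^{\varphi+\psi_L}\ge\int_L\varphi_{\zeta\bar\zeta}|a|^2e^{\varphi+\psi_L}$. This is the claimed inequality.

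\textbf{Main obstacle.} The only delicate point is regularity. Because $\psi$ is merely continuous, $f$ is not smooth, so Definition~\ref{def:L^2 peroperty} must be used for general $L^2$ data. This is allowed, since the definition quantifies over all $f\in L^2_{(0,1)}$, and in one dimension every $(0,1)$-form is $\dbar$-closed. Alternatively, approximate $f_1$ uniformly by smooth compactly supported forms and pass to the limit, exactly as in the density step at the end of the proof of Proposition~\ref{prop:line-localization}. One must also justify the integration by parts in Step 2 when $u$ is only $L^2_{\mathrm{loc}}$. This holds because $\dbar u=f$ distributionally and $b$ is a test function, so no boundary terms arise.
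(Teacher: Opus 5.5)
Your proposal is correct and is essentially the paper's own argument: the paper also takes $f=\Delta\varphi\,e^{\varphi+\psi_L}a\,d\bar\zeta$, solves $\dbar u=f$ with the optimal estimate for $(L,\psi_L)$, integrates by parts against $a$, and applies Cauchy--Schwarz to get $I^2\le I\int_L|\dbar a|^2e^{\varphi+\psi_L}\,d\lambda$. Your substitution $b=\bar a$ is slightly more careful than the paper: pairing directly with $a$, as the paper does, produces $\overline{\partial a}$ rather than $\overline{\dbar a}$ after integration by parts, and this slip is harmless only because the resulting inequality holds for every $a$ and can therefore be applied to $\bar a$.
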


\begin{proof}
Since $\dim_{\mc}L=1 $,
the $(0,1)$-form $f=
\Delta \varphi \cdot e^{\varphi+\psi_{L}}\cdot  ad\bar\zeta$
is $\dbar$-closed on $L$
For any smooth strictly
subharmonic function $\varphi$ on $\mc$
and any  $a\in C_c^\infty(L)$.
Therefore,
there exits $u \in L^2(L,\varphi+\psi _{L})$, satisfy $\dbar u=f$ with the estimate
\begin{equation}\label{eq:main-ineq-dim-1}
\begin{aligned}
\int_{L} |u|^2 e^{-\varphi-\psi_{L}} d\lambda 
&\leq \int_{L}\frac{|\Delta \varphi \cdot  e^{\varphi+\psi_{L}}ad\bar\zeta|^2}{\Delta \varphi}  e^{-\varphi-\psi_{L}}  d\lambda\\
& =\int_{L} \Delta \varphi \cdot |a|^2 e^{\varphi+\psi_{L}}  d\lambda <+\infty
\end{aligned}
\end{equation}
Since \(\bar\partial u=\Delta\varphi\cdot e^{\varphi+\psi_L}a\,d\bar\zeta\), 
identify $\bar\partial u$ with its coefficient
in the coordinate $\zeta$ and pairing the identity
$\bar\partial u=\Delta\varphi\,
e^{\varphi+\psi_L}a\,d\bar\zeta$
with $a$, 
we have
\[
\begin{aligned}
\int_{L} \Delta \varphi \cdot |a|^2 e^{\varphi+\psi_{L}}  d\lambda
&=
\int_L
\dbar u \cdot \overline a d\lambda
\\
&=
-\int_L
u \cdot \overline{\dbar \alpha } d\lambda.
\end{aligned}
\] 
where the last equality follows from integration by parts, since \(a\in C_c^\infty(L)\).

By the Cauchy--Schwarz inequality,
we have 
\[
\begin{aligned}
\left( \int_{L} \Delta \varphi \cdot |a|^2 e^{\varphi+\psi_{L}}  d\lambda\right)^2
&\leq
\left(
\int_{L}
|u|^2e^{-\varphi-\psi_{L}}d\lambda
\right)
\left(
\int_L
|\dbar \alpha|^2e^{\varphi+\psi_{L}}d\lambda
\right) \\
&\leq
\left( \int_{L} \Delta \varphi \cdot |a|^2 e^{\varphi+\psi_{L}}  d\lambda\right)
\left(
\int_L
|\dbar \alpha |^2e^{\varphi+\psi_{L}}d\lambda
\right),
\end{aligned}
\]
Then we have 
$$
\int_{L}
(|\dbar \alpha |^2-\Delta \varphi \cdot |a|^2 )e^{\varphi+\psi_{L}}d\lambda \geq 0,
$$
holds for any
$a\in C_c^\infty(L)$.

\end{proof}

\subsection{Log-Exponential Regularization}

We now apply Proposition~\ref{prop:one-dimensional-dual} to the log-exp regularization of the $L^2$ optimal weight.

\begin{prop}\label{prop:main-mollifier}
Let $D$ be a bounded domain in $\mathbb{C}^n$ 
and $\psi$ be a real-valued continuous function defined on $D$.
Assume that $(D,\psi)$ is $L^2$-optimal,
then for any stander mollifier $\rho_{\epsilon}$,
the function 
$$
\psi_\epsilon =\textup{log} (e^{\psi} * \rho_{\epsilon})
$$
converge to $\psi$
and plurisubharmonic on $D_\varepsilon$
where 
$D_\varepsilon
:=
\{z\in D:\dist(z,\C^n\setminus D)>\varepsilon\}.$
\end{prop}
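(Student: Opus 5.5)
The proof has three parts: a convergence statement, an averaging argument that moves the dual inequality of Proposition~\ref{prop:one-dimensional-dual} from $\psi$ to $\psi_\varepsilon$, and a Gaussian test function that extracts $\partial_\zeta\partial_{\bar\zeta}\psi_\varepsilon\geq 0$ on every complex line. Throughout, the operator written $\Delta\varphi$ in Proposition~\ref{prop:one-dimensional-dual} is the coefficient of $B_\varphi$, namely $\varphi_{\zeta\bar\zeta}$; I use that normalization.

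\emph{Smoothness and convergence.} Since $\psi$ is continuous, $e^{\psi}$ is continuous and positive. Hence $e^\psi*\rho_\varepsilon$ is smooth and positive on $D_\varepsilon$, and it converges to $e^{\psi}$ locally uniformly as $\varepsilon\to0$. Taking logarithms, $\psi_\varepsilon$ is smooth on $D_\varepsilon$ and $\psi_\varepsilon\to\psi$ locally uniformly.

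\emph{Averaging the dual inequality.} Fix a complex disc $L=\{z_0+\zeta v:|\zeta|<r\}$ with $\overline L\subset D_\varepsilon$. For every $y\in\C^n$ with $|y|<\varepsilon$, the translate $L-y$ is a disc contained in $D$. Parametrize $L$ and $L-y$ by the same variable $\zeta$. Proposition~\ref{prop:one-dimensional-dual}, applied on $L-y$, gives for every smooth strictly subharmonic $\varphi$ on $\C$ and every $a\in C_c^\infty(\mathbb D_r)$:
\[
\int_{\mathbb D_r}\big(|\partial_{\bar\zeta}a|^2-\varphi_{\zeta\bar\zeta}|a|^2\big)e^{\varphi(\zeta)}e^{\psi(z_0+\zeta v-y)}\,d\lambda(\zeta)\geq0 .
\]
Multiply by $\rho_\varepsilon(y)$, integrate in $y$, and apply Fubini (everything is continuous with compact support). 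Since $\int e^{\psi(z-y)}\rho_\varepsilon(y)\,dy=e^{\psi_\varepsilon(z)}$, this yields
\[
\int_{\mathbb D_r}\big(|\partial_{\bar\zeta}a|^2-\varphi_{\zeta\bar\zeta}|a|^2\big)e^{\varphi+\psi_\varepsilon|_L}\,d\lambda\geq0 ,
\]
for all such $\varphi$ and $a$. So the dual inequality now holds with the smooth weight $\psi_\varepsilon|_L$.

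\emph{Extracting $(\psi_\varepsilon|_L)_{\zeta\bar\zeta}\geq0$ by a Gaussian test function.} This step carries the main content. Translate so that the point of interest is $\zeta=0$, and write $h=\psi_\varepsilon|_L$. For a large parameter $\lambda$, let $s=\lambda|\zeta|^2$ and take $\varphi=s-h$ on a fixed disc $\mathbb D_{2\delta}$.
\begin{itemize}
\item \emph{Admissibility of $\varphi$.} For $\lambda$ large, $s-h$ is strictly subharmonic on $\overline{\mathbb D}_{2\delta}$, because $\lambda>\sup h_{\zeta\bar\zeta}$ there. I then modify it outside $\mathbb D_\delta$, using a regularized maximum with $A|\zeta|^2-C$ for suitable constants, to obtain a global smooth strictly subharmonic function on $\C$. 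It agrees with $s-h$ on $\mathbb D_\delta$, which is where the test function will live.
\item \emph{Test function.} Take $a=\chi e^{-s}$, where $\chi\in C_c^\infty(\mathbb D_\delta)$ and $\chi\equiv1$ near $0$. On $\operatorname{supp} a$ we have $e^{\varphi+h}=e^{s}$ and $\varphi_{\zeta\bar\zeta}=\lambda-h_{\zeta\bar\zeta}$.
\item \emph{Left side.} It equals $\int\chi^2\lambda^2|\zeta|^2e^{-\lambda|\zeta|^2}\,d\lambda$ plus terms involving $\partial_{\bar\zeta}\chi$. Those terms are $O(e^{-c\lambda})$, since $\partial_{\bar\zeta}\chi$ vanishes near $0$. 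The main term is $\pi+O(e^{-c\lambda})$.
\item \emph{Right side.} It equals $\int\chi^2(\lambda-h_{\zeta\bar\zeta})e^{-\lambda|\zeta|^2}\,d\lambda$. Taylor-expanding $h_{\zeta\bar\zeta}$ at $0$ gives $\pi-\pi h_{\zeta\bar\zeta}(0)/\lambda+O(\lambda^{-3/2})$.
\end{itemize}
The inequality (left $\geq$ right) therefore reads $\pi h_{\zeta\bar\zeta}(0)/\lambda+O(\lambda^{-3/2})\geq0$. Letting $\lambda\to\infty$ gives $h_{\zeta\bar\zeta}(0)\geq0$.

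Since the point and the direction $v$ were arbitrary, $\psi_\varepsilon$ is subharmonic on every complex line through $D_\varepsilon$. Being smooth, it is plurisubharmonic on $D_\varepsilon$.

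The main obstacle is this last step. The leading terms $\pi$ cancel exactly, so the sign of $h_{\zeta\bar\zeta}(0)$ is detected only at order $1/\lambda$. The cutoff and Taylor errors must therefore be genuinely of smaller order, and the global extension of $\varphi$ must not disturb it on $\operatorname{supp}a$. Both are ensured by the exponential decay of $e^{-\lambda|\zeta|^2}$ away from $0$.
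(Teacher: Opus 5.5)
Your proof is correct. Its first two parts coincide with the paper's proof. The paper also translates $L$ by $|y|<\varepsilon$, applies Proposition~\ref{prop:one-dimensional-dual} on each translate, multiplies by $\rho_\varepsilon(y)$ and uses Fubini, which replaces $e^{\psi_L}$ by $e^{\psi_\varepsilon|_L}$ in the dual inequality.

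You differ in how the sign of $h_{\zeta\bar\zeta}$ is extracted.

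\emph{The paper's route.} It substitutes $a=e^{-\varphi-p}\chi$ (with $p=h$) and integrates by parts; this is in effect a one-variable Morrey--Kohn--H\"ormander identity for the weight $e^{-(\varphi+p)}$. The dual inequality becomes $\int(|\bar\partial\chi|^2+p_{\zeta\bar\zeta}|\chi|^2)e^{-\varphi-p}\,d\lambda\geq0$ for all $\chi$ and all admissible $\varphi$. In this form $p_{\zeta\bar\zeta}$ appears at leading order. A contradiction argument then finishes, using $\chi\equiv1$ near the center and the entire weights $\varphi_s=s(|\zeta|^2-r_1^2/4)$.

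\emph{Your route.} The choice $\varphi=\lambda|\zeta|^2-h$, $a=\chi e^{-\lambda|\zeta|^2}$ is the same substitution $a=\chi e^{-(\varphi+h)}$, with $\varphi+h$ a Gaussian exponent. Instead of invoking the identity, you evaluate both sides explicitly. The exact cancellation of the two leading terms $\pi$ is precisely what the integration by parts encodes. The inequality $h_{\zeta\bar\zeta}(0)\geq0$ then comes out directly at order $1/\lambda$, with no contradiction needed.

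\emph{Trade-offs.} Your weight is only locally defined, so it must be made admissible on all of $\mathbb{C}$. Your regularized maximum works. Simpler still is $\varphi=\lambda|\zeta|^2-\theta h$ with a cutoff $\theta\equiv1$ on $\mathbb{D}_\delta$, which is strictly subharmonic once $\lambda>\sup|(\theta h)_{\zeta\bar\zeta}|$. You also need a second-order asymptotic expansion with controlled error terms. The paper avoids both by using globally defined quadratic weights, at the cost of the integration-by-parts computation. In exchange, it records an intermediate Levi-type inequality that is useful on its own.
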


\begin{proof}
For any complex disc $L$ contained in $D$,
Proposition \ref{prop:line-localization} shows that $(L,\psi\big|_{L})$ is $L^2$ optimal.
The stander regularization technique prove the convergence part.
The remaining is to show that $\rho_\epsilon$ is subharmonic on any complex disc $L$ contained in $D$.

Fix $z_0\in D_\epsilon$ and a unit vector $v\in\C^n$. Choose
$r>0$ sufficiently small such that 
\[
L
=
\{z_0+\zeta v:|\zeta|<r\}
\subset D_\varepsilon \subset D.
\]
Set $\mathbb D_r:=\{\zeta\in\C:|\zeta|<r\}$, 
we can choose a propert translation $L-\xi=\mathbb D_r \subset D_{\epsilon}$,
then by Proposition \ref{prop:one-dimensional-dual}, we have 
$$
\int_{\mathbb D_r}
(|\dbar \alpha |^2-\Delta \varphi \cdot |a|^2 )e^{\varphi+\psi_{L}(\zeta)}d\lambda \geq 0
$$
holds for any $\alpha \in C^{\infty}_{c}(\mathbb D_{r})$.
Multiplying  $\rho_\varepsilon(\xi)$ and integrating in $\xi$ shows that 
$$
\int_{\mc^n} \rho_\varepsilon(\xi) \int_{\mathbb D_{r}}
(|\dbar \alpha |^2-\Delta \varphi \cdot |a|^2 )e^{\varphi+\psi_{L}(\zeta)}d\lambda d\mu \geq 0,
$$
by the Fubini's theorem,
we have 
\begin{equation}\label{eq:333}
	\int_{\mathbb D_r}
\left(
|\dbar \alpha|^2
-
\Delta \varphi \cdot |a|^2
\right)
e^{\varphi(\zeta)}
 e^{\psi_{L} (\zeta)} *\rho_{\epsilon}
d\lambda
\geq0.
\end{equation}

Set 
$p(\zeta)
=
 \textup{log} (e^{\psi_{L} (\zeta)} *\rho_{\epsilon})$, 
$q=\varphi+p(\zeta)$, 
for any $\chi \in C_c^\infty(\mathbb D_r)$, 
set $a=e^{-\varphi-p(\zeta)}   \chi$,
then we have
\[
\dbar a
=
e^{-\varphi-p(\zeta)}(\dbar  \chi -\dbar q \cdot \chi).
\]
substituting this into \eqref{eq:333},
we have 
\begin{equation}\label{eq:before-weighted-identity}
	\int_{\mathbb D_r}
\left(
|\dbar  \chi -\dbar q \cdot \chi|^2
-
\Delta \varphi \cdot |\chi|^2
\right)
e^{-\varphi-p(\zeta)} 
d\lambda
\geq0.
\end{equation}
Since 
$$
\begin{aligned}
\overline{\dbar \chi-\dbar q \cdot \chi}e^{-q}
&=
\left(
\pa  \overline{\chi}
-\pa q \bar\chi
\right)e^{-q}  \\
&=
\partial
\left(\bar\chi e^{-q}\right).
\end{aligned}
$$
we have 
$$
\begin{aligned}
\int_{\mathbb D_r}
|\dbar  \chi -\dbar q \cdot \chi|^2 e^{-q } d\lambda
&=
\int_{\mathbb D_r}
(\dbar  \chi -\dbar q \cdot \chi)
\partial
\left(\bar\chi e^{-q}\right) d\lambda
\end{aligned}
$$
Since \(\chi\) has compact support in \(\mathbb D_r\), 
we have 

$$
\begin{aligned}
\int_{\mathbb D_r}
|\dbar  \chi &-\dbar q \cdot \chi|^2 e^{-q } d\lambda\\
&=
-
\int_{\mathbb D_r}
\pa (\dbar  \chi -\dbar q \cdot \chi)\bar\chi e^{-q}
 d\lambda\\
&=-\int_{\mathbb D_r} (\pa \dbar \chi -\pa \dbar \varphi-\pa \dbar p(\zeta) \cdot \chi -\dbar q \cdot \pa \chi) \bar\chi e^{-q} d\lambda \\
&=-\int_{\mathbb D_r} \Delta \chi \cdot  \bar\chi e^{-q} d\lambda+ \int_{\mathbb D_r} \Delta \varphi \cdot  \bar\chi e^{-q} d\lambda+\int_{\mathbb D_r} \Delta p(\zeta) \cdot  \bar\chi e^{-q} d\lambda \\
&\;\;\;\;+\int_{\mathbb D_r} \dbar q \cdot \pa \chi\cdot  \bar\chi e^{-q} d\lambda
\end{aligned} 
$$
integration by parts, we have 
$$
\begin{aligned}
-\int_{\mathbb D_r} \Delta \chi \cdot  \bar\chi e^{-q} d\lambda
&=
\int_{\mathbb D_r}
\dbar \chi \cdot 
\overline{\pa(\bar\chi e^{-q})}d\lambda \\
&=\int_{\mathbb D_r}
\dbar \chi \cdot \overline{(\pa \bar \chi -\pa q\bar \chi )} e^{-q}
d\lambda \\
&=\int_{\mathbb D_r}
\dbar \chi \cdot (\dbar   \chi -\dbar  q\pa  \chi ) e^{-q}
d\lambda
\end{aligned}
$$
we obtain
\begin{equation}\label{eq:levi-test}
\int_{\mathbb D_r}
\left(
|\dbar \chi|^2
+
\Delta p \cdot |\chi|^2
\right)
e^{-\varphi-p}d\lambda
\geq0.
\end{equation}
holds for any $\chi \in C_c^\infty(\mathbb D_r)$.
The remaining is to show that $p$ is subharmonic function.

We now argue by contradiction. Suppose that $p$ is not subharmonic on $\mathbb{D}_r$, 
then there is $z_0\in \mathbb{D}_r$, $r_1>0$, a constant $c>0$, and a contant $\xi\in
\mathbb{C}$ with $|\xi|=1$,  
such that
$$
\left(\frac{\partial^2 p}{\partial \zeta \partial\bar{\zeta}}(z)\right)\xi\bar{\xi}<-c
$$
holds for any $z\in B(z_0,r_1):=\{z\in \mc: |z-z_0|<r_1\}\subset \mathbb{D}_r$.
We may assume that $z_0=0$, and write $B(0,r_1)$ as $B_{r_1}$.

Choose $\kappa \in C^\infty_c(B_{r_1})$, satisfying $\kappa(z)=1$ for $z\in B_{r_1/2}$.
Set $\chi =\kappa  \xi .
$
For $s>0$, set $\varphi_s(z)=s(|z|^2-\frac{r_1^2}{4}).$

We now estimate the integrations on the left hand side of \eqref{eq:levi-test}  on $B_{r_1/2}$ 
and $\mathbb{D}_{r} \backslash B_{\frac{r_1}{2}}$ separately,
with $\chi$ and $\psi$ replaced by $\kappa \xi$ and $\varphi_s$ respectively.

\begin{itemize}
\item
On $B_{\frac{r_1}{2}}$, as $s\ra +\infty$, we have
we have
$$
\int_{B_{\frac{r_1}{2}}}|\dbar \chi|^2
+
\Delta p \cdot |\chi|^2
e^{-\varphi_s-p} d\lambda
\leq
\int_{B_{\frac{r_1}{2}}}(0-c)e^{-(\phi+\psi_s)} d\lambda.
$$
\item
On $\mathbb{D}_r \backslash B_{\frac{r_1}{2}}$,
notice 
that $\lim_{s\rightarrow+\infty}\varphi_s(z)=+\infty$ for $z\in \mathbb{D}_r \backslash B_{\frac{r_1}{2}}$.
We have 
$$
\lim_{s\rightarrow+\infty}\left(
  \int_{\mathbb{D}_r \backslash B_{\frac{r_1}{2}}}|\dbar \chi|^2
+
\Delta p \cdot |\chi|^2
e^{-\varphi_s-p} d\lambda\right)=0.
$$
\end{itemize}

Notice that $\varphi_s\leq 0$ on $B_{r/2}$,
we have,
 $$
 \int_{\mathbb D_r}
\left(
|\dbar \chi|^2
+
\Delta p \cdot |\chi|^2
\right)
e^{-\varphi-p}d\lambda<0
 $$ for $s>>1$,
which contradicts to the inequality \eqref{eq:levi-test}.

Since \(L\) is arbitrary, 
then \(\psi_{\varepsilon}\) is plurisubharmonic on \(D_{\varepsilon}\).
\end{proof}

\section{Proof of Theorem \ref{thm:main-1}}\label{sec:4}

We first recall the following analytic characterization of pseudoconvexity.

\begin{thm}[\cite{Liu-Zhang},\cite{Liu-Zhang-Skoda}]\label{thm:liu-zhang-char-domain}

Let $D$ be a domain in $\mathbb{C}^n$ satisfies $\mathring{\overline{D}}=D$, and let $\psi\in C^{0}(D,\mathbb{R})$ be a real-valued continuous function defined on $ D$,
assume that $(D,\psi)$ is $L^2$-optimal, 
then $D$ is pseudoconvex domain.
\end{thm}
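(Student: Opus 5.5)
The plan is to argue by contradiction via the Hartogs extension phenomenon, following the Deng–Ning–Wang strategy, with the continuity of $\psi$ used only to control $e^{-\psi}$ from above and below on compact sets. Suppose $D$ is not pseudoconvex. Then $D$ is not a domain of holomorphy, so there is a Hartogs figure $(H,\widehat H)$ with $H\subset D$ and $\widehat H\not\subset D$. Every $h\in\mathcal O(D)$ extends holomorphically to $D\cup\widehat H$, and the extension is given by a Cauchy integral over a fixed compact set $K\subset H\subset D$. In particular, the extension operator satisfies
\[
\sup_{\widehat H'}|\tilde h|\leq C\sup_K|h|
\]
for a slightly shrunk figure $\widehat H'$ and a constant $C$ independent of $h$. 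The hypothesis $\mathring{\overline D}=D$ is what lets us choose a point $w\in\widehat H'\setminus\overline D$. Indeed, $\widehat H'$ meets $\mathbb C^n\setminus D$ in a set with nonempty interior, and if that intersection were contained in $\overline D$, we would get interior points of $\overline D$ outside $D$.

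Having fixed $w$, after a unitary change of coordinates we arrange that the complex line $\ell=\{w+\zeta e_1\}$ meets $\widehat H'$ in a disc whose boundary part lies over $K$. Choose a cut-off $\chi\in C_c^\infty(B(w,\delta))$ with $\chi\equiv1$ on $B(w,\delta/2)$, where $B(w,\delta)\cap \overline{D}=\emptyset$ is not required; we only need $\mathrm{dist}(w,\overline D)>0$, so that the form
\[
f=\dbar\Bigl(\frac{\chi(z)}{z_1-w_1}\Bigr)
\]
is smooth, $\dbar$-closed and bounded on $D$. We apply the $L^2$-optimality of $(D,\psi)$ with the smooth strictly plurisubharmonic weights
\[
\phi_t=t\,\sigma(z)+|z|^2,
\]
where $\sigma$ is a smooth plurisubharmonic function that is very negative on $K$ and nonnegative on $\operatorname{supp}\dbar\chi$. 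For instance, take $\sigma$ to be a smoothing of $\log(|z-w|^2/\delta^2)$. This gives $u_t$ with $\dbar u_t=f$ and
\[
\int_D|u_t|^2e^{-\phi_t-\psi}\leq \int_D\langle B_{\phi_t}^{-1}f,f\rangle e^{-\phi_t-\psi},
\]
whose right-hand side stays bounded as $t\to\infty$, since $f$ lives where $\sigma\geq0$ and $B_{\phi_t}\geq \mathrm{Id}$. Because $\psi$ is continuous, $e^{-\psi}$ is bounded above and below near $K$. Hence $\int_{K'}|u_t|^2\lesssim e^{t\min_{K'}\sigma}\to0$ on a neighbourhood $K'$ of $K$. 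The function $h_t=\chi/(z_1-w_1)-u_t$ is holomorphic on $D$, and by the mean value inequality $\sup_K|u_t|\to0$.

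It remains to derive the contradiction. The function $(z_1-w_1)h_t$ is holomorphic on $D$ and equals $\chi-(z_1-w_1)u_t$. Its extension to $\widehat H'$ vanishes at $w$, since $h_t$ itself extends holomorphically through $w$. On the other hand, by the sup-bound of the extension operator applied to $(z_1-w_1)h_t$, this extension is uniformly close to the extension of $(z_1-w_1)\cdot\chi/(z_1-w_1)$ restricted along $K$. To make this rigorous I will arrange the Hartogs figure so that $\chi\equiv1$ on the relevant part, giving a function uniformly close to the constant $1$ on $\widehat H'$. This contradicts vanishing at $w$ once $t$ is large.

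The main obstacle is the geometric bookkeeping in this last step. One has to choose the Hartogs figure, $w$, $\chi$ and $\sigma$ compatibly, so that $f$ is supported where $\sigma\geq0$, $K$ lies where $\sigma\ll0$, and $\chi\equiv1$ on the portion of $K$ used for extension. These requirements pull against each other. If they cannot be met simultaneously, I would instead replace $\chi/(z_1-w_1)$ by a Cauchy-kernel-type function along the disc $\ell\cap\widehat H'$, and use the one-dimensional restriction Proposition~\ref{prop:line-localization} to carry out the estimate on slices.
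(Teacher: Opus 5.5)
\textbf{Gap: no admissible weight can force $u_t\to0$ on $K$.} The paper does not prove this statement; it quotes it from Liu--Zhang. So your argument has to stand on its own, and its central step fails.

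Your final skeleton is sound. Suppose you had $g$ smooth on $D$ with $g=1/(z_1-w_1)$ near $K$, and solutions of $\bar\partial u_t=\bar\partial g$ on $D$ with $\sup_K|u_t|\to0$. Then $(z_1-w_1)(g-u_t)$ would contradict the extension bound at $w$. But the only source of smallness on $K$ is the weight. Your estimate gives
$$\int_{K'}|u_t|^2\lesssim e^{t(\max_{K'}\sigma-\inf_{D\cap\operatorname{supp}\bar\partial g}\sigma)},$$
so you need a smooth psh $\sigma$ on $\mathbb C^n$ and a constant $c$ with $\max_{K}\sigma<c\le\inf_{D\cap\operatorname{supp}\bar\partial g}\sigma$. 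No such $\sigma$ exists, whatever $w,\chi,\sigma$ you choose:
\begin{enumerate}
\item Let $E$ be the union of the components of $\{z\in D:\sigma(z)<c\}$ that meet $K$. On $E$ the function $g$ is holomorphic, and $(z_1-w_1)g\equiv1$ near $K$, hence on all of $E$. So $E$, and in particular $K$, misses $\{z_1=w_1\}$.
\item Take the Hartogs discs $\Delta_s$, $s\in[0,1]$, with $\partial\Delta_s\subset K$, $\overline\Delta_0\subset H$ and $w\in\Delta_1$. Since $\sigma$ is psh on all of $\mathbb C^n$, the maximum principle gives $\sigma<c$ on each $\Delta_s$, so $\overline\Delta_0\subset E$.
\item The winding number of $z_1-w_1$ along $\partial\Delta_s\subset K$ is independent of $s$. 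It is $0$ for $\Delta_0$, which misses $\{z_1=w_1\}$, and at least $1$ for $\Delta_1$, which contains $w$. This is a contradiction.
\end{enumerate}
So the exponent above can never be made negative. What you call geometric bookkeeping is a genuine obstruction: weights that are psh on $\mathbb C^n$ cannot separate $K$ from its polynomial hull, and the hull contains exactly the discs that produce the extension.

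\textbf{Further problems.}
\begin{itemize}
\item $\bar\partial\bigl(1/(z_1-w_1)\bigr)$ is a current carried by $\{z_1=w_1\}$. So $f=\bar\partial(\chi/(z_1-w_1))$ is smooth on $D$ only if $\{z_1=w_1\}\cap\operatorname{supp}\chi\cap D=\emptyset$, which $\operatorname{dist}(w,\overline D)>0$ does not give. If instead $\operatorname{supp}\chi$ misses $D$, then $f\equiv0$ on $D$ and $\chi\not\equiv1$ on $K$.
\item Your explicit $\sigma\approx\log(|z-w|^2/\delta^2)$ is negative on $B(w,\delta)$, which contains $\operatorname{supp}\bar\partial\chi$. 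It is also not very negative on $K$, which stays at distance at least $\operatorname{dist}(w,\overline D)$ from $w$.
\item Since $\psi$ is only continuous on $D$, the integral $\int|f|^2e^{-\psi}$ can diverge if $\operatorname{supp}f$ reaches $\partial D$.
\item The fallback does not help. Proposition~\ref{prop:line-localization} concerns discs contained in $D$, whereas yours passes through $w\notin\overline D$. More fundamentally, slices $\ell\cap D$ are planar open sets, hence pseudoconvex. For $\psi=0$ they are all $L^2$-optimal by H\"ormander's theorem, so one-dimensional restrictions cannot detect the failure of pseudoconvexity.
\end{itemize}
Finally, your argument never uses the optimality of the constant, only weighted solvability. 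Some genuinely different input is needed to get past the obstruction above.
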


Thus, associated with every fixed \(L^2\)-optimal pair \((D,\varphi)\), there is a natural pseudoconvex exhaustion of \(D\). Since \(D\) is bounded, we may choose the domains \(D_{\varepsilon}\) appearing in Proposition~\ref{prop:main-mollifier} to form such a pseudoconvex exhaustion of \(D\), with
$$
\bigcup_{\varepsilon>0} D_{\varepsilon}=D
$$
where $\{D_{\epsilon}\}$ is a sequence of pseudoconvex domains satisfy \(D_{\varepsilon_1}\subset D_{\varepsilon_2}\) whenever \(0<\varepsilon_2<\varepsilon_1\).

\begin{prop}\label{prop:decreasing-psh-approx}
Set $\delta_j:=4^{-j}.$
Then there exists a sequence \(\varepsilon_j\downarrow0\) and smooth
plurisubharmonic functions
\[
q_j:=p_{\varepsilon_j}+3\delta_j,
\]
defined on \(D_{\varepsilon_j}\) such that $q_{j+1}<q_j$  on $D_{\varepsilon_j}.$
\end{prop}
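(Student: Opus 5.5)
Here $p_\varepsilon=\psi_\varepsilon=\log(e^\psi*\rho_\varepsilon)$, which by Proposition~\ref{prop:main-mollifier} is smooth and plurisubharmonic on $D_\varepsilon$. Each $q_j=p_{\varepsilon_j}+3\delta_j$ is therefore automatically smooth and plurisubharmonic on $D_{\varepsilon_j}$, and only the strict monotonicity $q_{j+1}<q_j$ on $D_{\varepsilon_j}$ needs proof. The idea is to choose $\varepsilon_j$ inductively so that each $p_{\varepsilon_j}$ is uniformly $\delta_j$-close to $\psi$ on a suitable compact set. The shift $3\delta_j$ is then large enough to absorb the errors, because $3\delta_{j+1}+2\delta_j<3\delta_j$ follows from $3\cdot 4^{-j-1}<4^{-j}$.

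\textbf{Step 1: uniform convergence.} Since $\psi$ is continuous, $e^\psi$ is uniformly continuous on every compact $K\subset D$. Hence $e^\psi*\rho_\varepsilon\to e^\psi$ uniformly on $K$. Because $e^\psi$ is bounded below by a positive constant on a slightly larger compact set, $\log$ is Lipschitz on the relevant range, and so $p_\varepsilon\to\psi$ uniformly on $K$. Here one uses $\int\rho_\varepsilon=1$, $\rho_\varepsilon\ge0$ and $\operatorname{supp}\rho_\varepsilon\subset B(0,\varepsilon)$. More precisely, $\log(e^\psi*\rho_\varepsilon)(z)$ lies between $\min_{\overline{B(z,\varepsilon)}}\psi$ and $\max_{\overline{B(z,\varepsilon)}}\psi$. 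This gives the clean bound
\[
|p_\varepsilon(z)-\psi(z)|\le \omega_K(\varepsilon),
\]
where $\omega_K$ is the modulus of continuity of $\psi$ on the $\varepsilon$-neighborhood of $K$.

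\textbf{Step 2: the compact sets.} The difficulty is that $D_{\varepsilon_j}$ is not relatively compact in $D_{\varepsilon_{j+1}}$ in any automatic sense, and $\psi$ need not be uniformly continuous on all of $D$. I would instead work on $K_j:=\overline{D_{\varepsilon_j}}$, which is compact because $D$ is bounded and is contained in $D_{\varepsilon_j/2}\subset D$. The inductive step is as follows. Given $\varepsilon_j$, let $\omega_j$ be the modulus of continuity of $\psi$ on the compact set $\overline{D_{\varepsilon_j/2}}$. Shrinking $\varepsilon_j$ at the previous step if needed, we may assume $\omega_j(\varepsilon_j)\le\delta_j$, so that $|p_{\varepsilon_j}-\psi|\le\delta_j$ on $D_{\varepsilon_j}$. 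Choose
\[
\varepsilon_{j+1}<\min(\varepsilon_j/2,\,\varepsilon_j/j)\quad\text{with}\quad \omega_j(\varepsilon_{j+1})\le\delta_{j+1}.
\]
Since $\varepsilon_{j+1}<\varepsilon_j/2$, every point of $D_{\varepsilon_j}$ has its $\varepsilon_{j+1}$-ball inside $D_{\varepsilon_j/2}$. Hence $|p_{\varepsilon_{j+1}}-\psi|\le\delta_{j+1}$ on $D_{\varepsilon_j}$.

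\textbf{Step 3: the comparison.} On $D_{\varepsilon_j}$ we then have
\[
q_{j+1}\le \psi+\delta_{j+1}+3\delta_{j+1}=\psi+4\delta_{j+1}=\psi+\delta_j<\psi+2\delta_j\le p_{\varepsilon_j}+3\delta_j=q_j.
\]
The main obstacle is only the bookkeeping in Step 2: the estimate at level $j$ must be arranged so that it holds on the whole set $D_{\varepsilon_j}$, not merely on compact subsets. Using the min/max bracketing of the log-mean and the compact set $\overline{D_{\varepsilon_j/2}}$ handles this. Finally, $\varepsilon_j\downarrow0$ because $\varepsilon_{j+1}<\varepsilon_j/2$.
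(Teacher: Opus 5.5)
Your route is the paper's route: approximate $\psi$ by $p_{\varepsilon_j}$ within $\delta_j$, then run the chain $q_{j+1}\le\psi+4\delta_{j+1}=\psi+\delta_j<\psi+2\delta_j\le q_j$. The part of Step~2 that controls $p_{\varepsilon_{j+1}}$ on $D_{\varepsilon_j}$ via $\varepsilon_{j+1}<\varepsilon_j/2$ is correct.

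The gap is the level-$j$ estimate $|p_{\varepsilon_j}-\psi|\le\delta_j$ on \emph{all} of $D_{\varepsilon_j}$. Step~3 uses its lower half, $p_{\varepsilon_j}\ge\psi-\delta_j$ there, and your justification of it fails on two counts.
\begin{enumerate}
\item For $z\in D_{\varepsilon_j}$ near $\partial D_{\varepsilon_j}$, the ball $B(z,\varepsilon_j)$ reaches arbitrarily close to $\partial D$. It is therefore not contained in $\overline{D_{\varepsilon_j/2}}$, and $\omega_j$ says nothing about $\psi$ there.
\item ``We may assume $\omega_j(\varepsilon_j)\le\delta_j$'' is circular. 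The modulus $\omega_j$ is defined through $\varepsilon_j$, the set $\overline{D_{\varepsilon_j/2}}$ grows as $\varepsilon_j$ shrinks, and nothing forces $\omega_j(\varepsilon_j)\to0$.
\end{enumerate}
What your induction actually gives is the bound for $p_{\varepsilon_j}$ only on the smaller set $D_{\varepsilon_{j-1}}$.

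The paper's proof asserts the same estimate on $D_{\varepsilon_j}$ directly from local uniform convergence, so it does not supply this step either. In fact the two-sided estimate is false even under the hypotheses. Take the unit disc with the harmonic weight $\psi=(1-|z|^2)/|1-z|^2$, which gives an $L^2$-optimal pair by H\"ormander's theorem. One checks that $p_\varepsilon(1-2\varepsilon)-\psi(1-2\varepsilon)\ge c/\varepsilon-C$.

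Nor can this be repaired by bookkeeping from continuity alone. Suppose $q_{k+1}<q_k$ on $D_{\varepsilon_k}$ for all $k$. Then for $z\in D_{\varepsilon_j}$ you get $q_k(z)<q_j(z)$ for all $k>j$, and letting $k\to\infty$ gives $\psi\le p_{\varepsilon_j}+3\delta_j$ on all of $D_{\varepsilon_j}$. For a merely continuous $\psi$ oscillating near $\partial D$, e.g.\ $\sin\bigl(1/(1-|z|)\bigr)$ on the unit ball, $\sup_{D_\varepsilon}(\psi-p_\varepsilon)$ stays bounded away from $0$. So the statement fails for large $j$ unless more than continuity is used.

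The missing ingredient is the one-sided bound $p_\varepsilon\ge\psi$ on $D_\varepsilon$, which comes from plurisubharmonicity:
\begin{enumerate}
\item By Proposition~\ref{prop:main-mollifier} and your Step~1, $\psi$ is a locally uniform limit of the psh functions $p_\varepsilon$, hence psh.
\item So $e^\psi$ is psh. For a radial mollifier, averaging the sub-mean value inequality over circles gives $e^\psi*\rho_\varepsilon\ge e^\psi$ on $D_\varepsilon$.
\item Combined with your correct bound $p_{\varepsilon_{j+1}}\le\psi+\delta_{j+1}$ on $D_{\varepsilon_j}$, this yields $q_{j+1}\le\psi+\delta_j<\psi+3\delta_j\le q_j$ on $D_{\varepsilon_j}$.
\end{enumerate}
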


\begin{proof}
By the local
uniform convergence,
we have $p_\varepsilon\longrightarrow\psi
$ when $\varepsilon\downarrow0.$

Thus there exits a sequence $\{\varepsilon_j\}$ with $\varepsilon_{j}\downarrow 0$
and 
\(p_{\varepsilon_j}\) is defined on 
\(D_{\varepsilon_j}\) and
\begin{equation}\label{eq:peps-approx}
\left|p_{\varepsilon_j}-\psi\right|
<
\delta_j
\qquad
\text{on } D_{\varepsilon_j}.
\end{equation}
Set $\varepsilon_{j+1}<\varepsilon_j,
$ such that $D_{\varepsilon_j}
\subset
D_{\varepsilon_{j+1}}.$

Since $q_j-\psi
=
p_{\varepsilon_j}-\psi+3\delta_j,$
then \eqref{eq:peps-approx} shows  that
\[
2\delta_j
<
q_j-\psi
<
4\delta_j
\qquad
\text{on }\overline D_{\varepsilon_j}.
\]

By the monotonicity of $\{D_{j}\}$,
since 
$$
4\delta_{j+1}
=
4\cdot 4^{-(j+1)}
=
\delta_j,
$$
the estimate 
$$
2\delta_{j+1}
<
q_{j+1}-\psi
<
4\delta_{j+1}
\qquad
\text{on }D_{\varepsilon_j+1}
$$
is also holds on
\(D_{\varepsilon_j}\). 

Thus we have 
\[
q_{j+1}-\psi
<
4\delta_{j+1}
=
\delta_j
<
2\delta_j
<
q_j-\psi.
\]
holds on on \( D_{\varepsilon_j}\).
Hence $q_{j+1}<q_j $ on $
D_{\varepsilon_j}$.

\end{proof}

\begin{prop}[Example 7.4 of Chapter 1 in \cite{Demaily}]
\label{prop:hartogs-domain}
Let $\omega\subset\mathbb C^{n}$ be a pseudoconvex domain, and let $u$
be a real valued, upper semicontinuous function. Consider the Hartogs domain
\[
\Omega
=
\left\{
(z_1,z')\in \omega \times \mc:
u(z')<e^{-u}
\right\}.
\]
Then $\Omega$ is pseudoconvex if and only if $u$ is
plurisubharmonic on $\omega$.
\end{prop}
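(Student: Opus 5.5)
The plan is to read the statement in its intended form,
\[
\Omega=\{(z,w)\in\omega\times\mathbb C:\ |w|<e^{-u(z)}\},
\]
and to prove the two implications separately. Both directions rest on the standard characterizations of pseudoconvexity for open sets in $\mathbb C^{n+1}$, as in Theorem 7.2 of Chapter 1 of \cite{Demaily}. By that theorem, $\Omega$ is pseudoconvex if and only if it admits a plurisubharmonic exhaustion function. Equivalently, for every fixed direction $v$, the function $-\log\delta_{\Omega,v}$ is plurisubharmonic on $\Omega$. Here $\delta_{\Omega,v}(x)=\sup\{r>0:\ x+\lambda v\in\Omega \text{ for all } |\lambda|<r\}$ is the boundary distance in direction $v$.

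\textbf{Step 1 ($\Leftarrow$).} Assume $u$ is plurisubharmonic on $\omega$. First, $\Omega$ is open because $u$ is upper semicontinuous. Next, set $v(z,w)=\log|w|+u(z)$. This is plurisubharmonic on $\omega\times\mathbb C$, since it is a sum of plurisubharmonic functions, and $\Omega=\{(z,w)\in\omega\times\mathbb C:\ v(z,w)<0\}$. The function $t\mapsto-\log(-t)$ is convex and increasing on $(-\infty,0)$, with value $-\infty$ at $t=-\infty$. Hence $-\log(-v)$ is plurisubharmonic on $\Omega$, and it tends to $+\infty$ as $v\to0^-$. Let $\phi$ be a plurisubharmonic exhaustion of the pseudoconvex domain $\omega$. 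I then take
\[
\Psi(z,w)=\max\bigl(\phi(z),\,-\log(-v(z,w)),\,|w|^2\bigr)
\]
as the candidate exhaustion of $\Omega$.

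I will check that each sublevel set $\{\Psi<c\}$ is relatively compact in $\Omega$. The bound on $\phi$ keeps $z$ in a compact subset $K$ of $\omega$, and the bound on $|w|^2$ bounds $w$. The bound on $-\log(-v)$ gives $|w|\le e^{-c'}e^{-u(z)}$ for some constant $c'$. Because $u$ is upper semicontinuous, $e^{-u}$ is lower semicontinuous, so this inequality defines a closed set, and on $K$ the function $e^{-u}$ is bounded below by a positive constant. Together these show the sublevel set stays away from $\partial\Omega$. Therefore $\Psi$ is a plurisubharmonic exhaustion, and $\Omega$ is pseudoconvex.

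\textbf{Step 2 ($\Rightarrow$).} Assume $\Omega$ is pseudoconvex and take the direction $v=(0,1)$, that is, the $w$-direction. For each $z\in\omega$ the fiber of $\Omega$ over $z$ is the disc $\{|w|<e^{-u(z)}\}$. Therefore $\delta_{\Omega,v}(z,0)=e^{-u(z)}$. By the characterization above, $-\log\delta_{\Omega,v}$ is plurisubharmonic on $\Omega$. Restricting it to the complex submanifold $\omega\times\{0\}\subset\Omega$ gives exactly $u$, so $u$ is plurisubharmonic on $\omega$.

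The main obstacle is Step 2, namely the plurisubharmonicity of $-\log\delta_{\Omega,v}$ for a pseudoconvex $\Omega$. I will not reprove this; I will cite it from \cite{Demaily}, where it is obtained via the Kontinuitätssatz (Hartogs figures) along analytic discs. A secondary point in Step 1 is to make sure the cited theorem allows merely upper semicontinuous plurisubharmonic exhaustions, which it does in \cite{Demaily}. If one wants a continuous exhaustion instead, I would replace $u$ by a decreasing sequence of smooth plurisubharmonic approximants of $u$ and write $\Omega$ as an increasing union of pseudoconvex domains, each with a continuous exhaustion. The Behnke--Stein theorem then gives the pseudoconvexity of $\Omega$.
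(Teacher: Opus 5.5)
The paper gives no proof of this statement; it only quotes Demailly. Your reading $\Omega=\{(z,w)\in\omega\times\mathbb{C}:|w|<e^{-u(z)}\}$ is the intended one. Your Step 2 is correct and is the standard argument: $\delta_{\Omega,(0,1)}(z,0)=e^{-u(z)}$, so $u$ is the restriction of the psh function $-\log\delta_{\Omega,(0,1)}$ to $\omega\times\{0\}$.

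Step 1, however, has a genuine gap whenever $u$ is discontinuous. You claim that $|w|\le e^{-c'}e^{-u(z)}$ defines a closed set because $e^{-u}$ is lower semicontinuous, but this has the semicontinuity backwards. Lower semicontinuity of $h=e^{-u}$ makes $\{|w|<h(z)\}$ open, which is why $\Omega$ is open. Closedness of $\{|w|\le h(z)\}$ would require $h$ to be upper semicontinuous.

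The conclusion itself fails, not just its justification. Suppose $u$ is discontinuous at $z_0$. Since $u$ is usc, this means $\liminf_{z\to z_0}u(z)<u(z_0)$. So there are $a_k\to z_0$ and $\eta>0$ with $u(a_k)\le u(z_0)-\eta$. Take $w_0$ with $|w_0|=e^{-u(z_0)}$.
\begin{itemize}
\item Each $(a_k,w_0)$ lies in $\Omega$.
\item $\log|w_0|+u(a_k)\le-\eta$, so $\Psi(a_k,w_0)\le\max\bigl(\sup_V\phi,\,-\log\eta,\,|w_0|^2\bigr)$ for a compact neighbourhood $V$ of $z_0$.
\item Yet $(a_k,w_0)\to(z_0,w_0)\in\partial\Omega$.
\end{itemize}
Hence a fixed sublevel set of $\Psi$ is not relatively compact in $\Omega$, and $\Psi$ is not an exhaustion.

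Such $u$ exist even among real-valued psh functions. For example, on the unit disc take $u=\sum_{k\ge1}2^{-k}\max\bigl(\log(|z-e^{-k}|/2),-4^k\bigr)$. It is a decreasing limit of continuous psh functions and is finite everywhere. It satisfies $u(0)=-2-\log 2$, while $u(e^{-k})\le-2^k$.

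The repair is the route you list as optional; it is in fact needed.
\begin{itemize}
\item Let $\Phi$ be a continuous psh exhaustion of $\omega$, and set $\omega_j=\{\Phi<j\}$. These are pseudoconvex and satisfy $\omega_j\Subset\omega$.
\item Put $u_j=u*\rho_{\varepsilon_j}$ with $\rho$ radial and $\varepsilon_j\downarrow0$ chosen smaller than $\operatorname{dist}(\omega_j,\mathbb{C}^n\setminus\omega)$. Then $u_j$ is continuous and psh on $\omega_j$, and $u_j\downarrow u$.
\item For continuous $u_j$ the set $\{\log|w|+u_j(z)\le-c'\}$ really is closed. So your max construction is a genuine exhaustion of $\Omega_j=\{(z,w)\in\omega_j\times\mathbb{C}:|w|<e^{-u_j(z)}\}$.
\item The $\Omega_j$ increase to $\Omega$ by pointwise monotone convergence, and Behnke--Stein finishes.
\end{itemize}
This is the same mechanism the paper uses in the proof of Theorem~\ref{thm:main-1}. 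There the ``if'' direction of the proposition is only applied to the smooth weights $q_j$, so for the paper's purposes your Step 1 in the continuous case already suffices.
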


We can now can prove Theorem~\ref{thm:main-1}.
\begin{thm}[= Theorem \ref{thm:main-1}]
  Let $D$ be a bounded domain in $\mathbb{C}^n$ satisfies $\mathring{\overline{D}}=D$, and let $\psi\in C^{0}(D,\mathbb{R})$ be a real-valued continuous function defined on $ D$,
assume that $(D,\psi)$ is $L^2$-optimal, the Hartogs domain
\[
D_\psi
=
\{(z,w)\in D\times\C:|w|<e^{-\psi(z)}\}
\]
is a pseudoconvex domain, furthermore, $\psi$ is plurisubharmonic. 
\end{thm}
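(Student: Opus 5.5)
We plan to exhibit $D_\psi$ as an increasing union of pseudoconvex Hartogs domains built from the smooth plurisubharmonic approximants, and then apply the Behnke--Stein theorem.

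First, by Theorem~\ref{thm:liu-zhang-char-domain}, $D$ is pseudoconvex. We fix a continuous plurisubharmonic exhaustion $\rho$ of $D$, for instance $\rho=-\log\dist(\cdot,\partial D)$. We then replace the sets $D_{\varepsilon_j}$ by the pseudoconvex sublevel sets
\[
\omega_j:=\{z\in D_{\varepsilon_j}:\rho(z)<j\}\cap\{|z|<j\},
\]
or by the connected component containing a fixed base point. These are relatively compact in $D_{\varepsilon_j}$, and they satisfy $\omega_j\subset\omega_{j+1}$ and $\bigcup_j\omega_j=D$. Here $D_{\varepsilon}$ itself is pseudoconvex, since $-\log\dist(z,\partial D_\varepsilon)=-\log(\dist(z,\partial D)-\varepsilon)$ is plurisubharmonic, because $-\log\dist(\cdot,\partial D)$ is.

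On $\omega_j$ we take the smooth plurisubharmonic functions $q_j=p_{\varepsilon_j}+3\delta_j$ from Proposition~\ref{prop:decreasing-psh-approx}, which are plurisubharmonic by Proposition~\ref{prop:main-mollifier}, and set
\[
\Omega_j:=\{(z,w)\in\omega_j\times\C:|w|<e^{-q_j(z)}\}.
\]
By Proposition~\ref{prop:hartogs-domain}, applied with base $\omega_j$ and weight $q_j$, each $\Omega_j$ is pseudoconvex.

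Next we check monotonicity and exhaustion.
\begin{itemize}
\item Since $q_{j+1}<q_j$ on $D_{\varepsilon_j}\supset\omega_j$ and $\omega_j\subset\omega_{j+1}$, we get $\Omega_j\subset\Omega_{j+1}$.
\item Since $q_j>\psi$, we get $\Omega_j\subset D_\psi$.
\item Conversely, take $(z,w)\in D_\psi$, so $|w|<e^{-\psi(z)}$. Then $z\in\omega_j$ for all large $j$. Moreover $q_j(z)-\psi(z)<4\delta_j\to0$, so $|w|<e^{-q_j(z)}$ for large $j$.
\end{itemize}
Hence $D_\psi=\bigcup_j\Omega_j$ is an increasing union of pseudoconvex domains, and the Behnke--Stein theorem gives that $D_\psi$ is pseudoconvex. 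Connectedness of $D_\psi$ is clear, since it is a disc bundle over the connected set $D$.

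Finally, Proposition~\ref{prop:hartogs-domain}, applied in the other direction with $\omega=D$ and $u=\psi$ (which is continuous, hence upper semicontinuous), gives that $\psi$ is plurisubharmonic. Alternatively, $\psi$ is the decreasing limit of the plurisubharmonic functions $q_j$, restricted to compacta.

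The main obstacle is the uniform estimate $|p_{\varepsilon_j}-\psi|<\delta_j$ used in Proposition~\ref{prop:decreasing-psh-approx}. The log-exp mollifications converge to $\psi$ only locally uniformly, since $\psi$ is merely continuous on $D$ and possibly unbounded near $\partial D$. For this reason I would choose $\varepsilon_j$ so that the estimate holds only on the compact set $\overline{\omega_j}\Subset D$. This is achievable because $e^{\psi}$ is uniformly continuous and bounded below on a compact neighborhood of $\overline{\omega_j}$, so $\log(e^\psi*\rho_\varepsilon)\to\psi$ uniformly there. The monotonicity argument of Proposition~\ref{prop:decreasing-psh-approx} goes through verbatim with $\omega_j$ in place of $D_{\varepsilon_j}$, provided $\varepsilon_{j+1}$ is also chosen so that the $(j+1)$-st estimate holds on $\overline{\omega_{j+1}}\supset\omega_j$.
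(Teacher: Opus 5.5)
Your argument is essentially the paper's: you exhaust $D_\psi$ by the pseudoconvex Hartogs domains built from $q_j=p_{\varepsilon_j}+3\delta_j$ (plurisubharmonic by Proposition~\ref{prop:main-mollifier}), apply Behnke--Stein, and read off plurisubharmonicity of $\psi$ from Proposition~\ref{prop:hartogs-domain}. Your localization to the compact pieces $\omega_j$ correctly repairs the paper's unjustified claim that $|p_{\varepsilon_j}-\psi|<\delta_j$ holds on all of $D_{\varepsilon_j}$, since locally uniform convergence gives no such control near $\partial D$. However, fix the order of choices: first take $\varepsilon_j<e^{-j}$, so that $\omega_j=\{\rho<j\}$ does not depend on $\varepsilon_j$ and $\overline{\omega_j}\subset\{\dist(\cdot,\partial D)\geq e^{-j}\}\subset D_{\varepsilon_j}$, and only then shrink $\varepsilon_j$ (decreasingly in $j$) to get the estimate on that fixed compact set.
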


\begin{proof}
By Proposition \ref{prop:decreasing-psh-approx},
there exist a sequence of pair 
$(D_{\epsilon_j}, q_{\epsilon_j})$ such that 
$$
\Omega_{j}
=
\{(z,w)\in D_{\epsilon_j}\times\C:|w|<e^{-q_{\epsilon_j}(z)}\}
$$
is an increasing sequence of domain in $\mc^{n+1}$ satisfying 
$\Omega_j\subset\Omega_{j+1}$,
and Proposition \ref{prop:hartogs-domain}
shows that $\Omega_j$ is a pseudoconvex domain,
Furthermore, by the monotonicity and the convergence
of $q_{\epsilon_j}$,
we have 
\[
D_\psi
=
\bigcup_{j\geq1}\Omega_j,
\]
the Behnke--Stein
increasing-union theorem therefore implies that $D_\psi$ is
pseudoconvex,
thus $\psi$ is a plursubharmonic function.
\end{proof}

\end{document}